\newtheorem{thm}{Theorem}
\newtheorem{proposition}[thm]{Proposition}
\newtheorem{lemma}[thm]{Lemma}
\newtheorem{fact}[thm]{Fact}
\newtheorem{corollary}[thm]{Corollary}
\newtheorem{convention}[thm]{Convention}
\newtheorem{question}{Question}
\theoremstyle{definition}
\newtheorem{definition}[thm]{Definition}
\newtheorem{remark}[thm]{Remark}
\theoremstyle{remark}
\def\setsep{:\;}
\def\en{\mathbb{N}}
\def\er{\mathbb{R}}
\def\qe{\mathbb{Q}}
\def\M{\mathcal M}
\def\S{\mathcal S}
\def\R{\mathcal R}
\def\F{\mathcal F}
\def\G{\mathcal G}
\def\A{\mathcal A}
\def\N{\mathcal N}
\def\dist{\operatorname{dist}}
\def\spn{\operatorname{sp}\,}
\def\closedSpan{\overline{\spn}}
\def\qespan{\operatorname{sp}_\mathbb{Q}}
\def\aco{\operatorname{aco}}
\def\oo{\sqsubset\!\!\!\sqsupset}
\title{Separable determination in Banach spaces}
\author{Marek C\'uth}
\address{Charles University, Faculty of Mathematics and Physics, Department of Mathematical Analysis, Sokolovsk\'a 83, 186 75 Prague 8, Czech Republic}
\email{cuth@karlin.mff.cuni.cz}
\subjclass[2010]{46B26, 46B20, 03C30}
\thanks{The author is a junior researcher in the University Centre for Mathematical Modelling, Applied Analysis and Computational Mathematics (MathMAC). Our investigation was supported by the Research grant GA\v{C}R P201/12/0290.}
\keywords{Asplund space, separable reduction, rich family, method of suitable models, $\sigma$-porosity, cone-smallness, generalized lush space, lush space}
\begin{document}

\begin{abstract}
We study a relation between three different formulations of theorems on separable determination  - one using the concept of rich families, second via the concept of suitable models and third, a new one, suggested in this paper, using the notion of $\omega$-monotone mappings. In particular, we show that in Banach spaces all those formulations are in a sense equivalent and we give a positive answer to two questions of O. Kalenda and the author. Our results enable us to obtain new statements concerning separable determination of $\sigma$-porosity (and of similar notions) in the language of rich families; thus, not using any terminology from logic or set theory.

Moreover, we prove that in Asplund spaces, generalized lushness is separably determined.
\end{abstract}
\maketitle

Let $X$ be a nonseparable topological space and let $\S(X)$ denote the family of all closed separable subspaces of $X$. Below we consider both topological spaces and also the special case of Banach spaces where by a ``subspace'' we mean a linear subset. 
One of the important methods of proofs in the nonseparable theory is the ``separable reduction''. By a separable reduction we usually mean the possibility to extend the validity of a statement from separable spaces to the nonseparable setting without knowing the proof of the statement in the separable case. This method has been used in the setting of Banach spaces e.g. in \cite{c, cuthFab, cr, crz, FaIo, LiPrTi, MoSp, Pr84, vz, z} and in the general setting of topological spaces e.g. in \cite{c, LiMo08}. Experience shows that an optimal method of separable reduction is to prove that certain notions are ``separably determined''. Roughly speaking, a statement $\phi$ concerning a topological space $X$ is considered to be \emph{separably determined} if there exists a sufficiently large family $\F\subset \S(X)$ such that for every $F\in\F$ we have
\begin{equation*}\label{eq:sepDet}\tag{$\maltese$}
\text{The statement }\phi\text{ holds in $X$ }\Longleftrightarrow \text{Analogous statement }\phi_F\text{ holds in $F$}.
\end{equation*}

As far as the author knows, this approach started in \cite{Pr84} and the notion of a separable determination was for the first time explicitly mentioned in \cite{LiPrTi}. Let us illustrate the use of separable determination by recalling the strategy of the proof of the result from \cite{Pr84}, which says that every Gateaux differentiable Lipschitz function $f:X\to \er$ on an Asplund space is Fr\'echet differentiable at some point. It consists of two steps:
\begin{itemize}
	\item[1.]{\it (proof for separable spaces)} The result holds if $X$ is Asplund and separable.
	\item[2.]{\it (separable determination)} If $X$ is Asplund and not separable, there is $F\in\S(X)$  such that
		\begin{equation}\label{eq:1}\forall x\in F:\quad f|_F\text{ is Fr\'echet differentiable at }x\implies f\text{ is Fr\'echet differentiable at }x.\end{equation}		
\end{itemize}
The statement easily follows. Indeed, let $X$ be an Apslund space which is not separable and let $f:X\to\er$ be Gateaux differentiable and Lipschitz. Construct the separable subspace $F$ such that \eqref{eq:1} holds. By the assumptions and since the statement holds for separable spaces, 
there is $x\in F$ such that $f|_F$ is Fr\'echet differentiable at $x$. Therefore, by \eqref{eq:1}, $f$ is Fr\'echet differentiable at $x$.

As demonstrated above, one makes the final deduction using just one separable subspace, however in order to combine finitely many results together, it is convenient to know that the family $\F$ is ``sufficiently large''. Let us make it clear with an example. Let $X$ be a Banach space, $A\subset X$ a Borel set and $f:X\to\er$ a function. By \cite[Proposition 4.1 and Theorem 5.10]{c}, there are ``sufficiently large'' families $\F_1,\F_2\subset\S(X)$ such that for every $F\in\F_1$ we have
\begin{equation}\label{eq:2}\forall x\in F:\quad f|_F\text{ is Fr\'echet differentiable at }x\iff f\text{ is Fr\'echet differentiable at }x,\end{equation}
and for every $F\in\F_2$ we have
\begin{equation}\label{eq:3}A\cap F\text{ is dense in }F\iff A\text{ is dense in }X.\end{equation}
Hence, if we consider $A = \{x\in X\setsep f\text{ is Fr\'echet differentiable at }x\}$ and pick $F\in\F_1\cap\F_2$, by \eqref{eq:2} we have
$A\cap F = \{x\in F\setsep f|_F\text{ is Fr\'echet differentiable at }x\}$ and so by \eqref{eq:3} we get
\[f\text{ is Fr\'echet differentiable on a dense set}\iff f|_F\text{ is Fr\'echet differentiable on a dense set}.\]
Let us emphasize that in order to pick $F\in\F_1\cap\F_2$, we need to know that $\F_1\cap\F_2\neq\emptyset$. Therefore, it is convenient to know that the intersection of finitely many ``sufficiently large'' families is not empty. It seems that the right approach is through one of the following concepts.

One of them is the \emph{rich family} introduced in \cite{boMo} by J. M. Borwein, W. Moors and then used  in the Banach space theory \cite{cuthFab, FaIo, LiPrTi, MoSp, vz, z} and in the setting of topological spaces \cite{LiMo08}.

\begin{definition}Let $X$ be a topological space. A family $\F\subset \S(X)$ is called {\em rich} if
\begin{enumerate}[\upshape (i)]
	\item $\F$ is \emph{cofinal}, that is, each separable subspace of $X$ is contained in an element of $\F$, and
	\item $\F$ is \emph{$\sigma$-closed}, that is, for every increasing sequence $F_i$ in $\F$, $\overline{\bigcup_{i=1}^\infty F_i}$ belongs to $\F$.
\end{enumerate}
\end{definition}

\noindent The nonemptyness of the intersection of finitely many families is then witnessed by the following.

\begin{proposition}[{\cite[Proposition 3.1]{LiMo08}}]\label{p:boMO}
Suppose that $X$ is a topological space. If $\{\F_n\setsep n\in\en\}$ are rich families then so is $\bigcap_{n\in\en}\F_n$.
\end{proposition}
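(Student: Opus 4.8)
The plan is to verify the two defining properties of a rich family separately for $\G:=\bigcap_{n\in\en}\F_n$, with essentially all the work concentrated in cofinality. I would first dispatch $\sigma$-closedness, which is immediate: if $(F_i)$ is an increasing sequence in $\G$, then for each fixed $n$ it is an increasing sequence lying in the $\sigma$-closed family $\F_n$, so $\overline{\bigcup_i F_i}\in\F_n$; as this holds for every $n$, the closure of the union lies in $\G$.

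The heart of the matter is cofinality, and here I would run a diagonal absorption argument. Fix a separable subspace $S\in\S(X)$ to be absorbed, and choose a sequence $(n_k)_{k\in\en}$ of indices in which every $n\in\en$ occurs infinitely often (for instance via any bijection $\en\to\en\times\en$). Build an increasing sequence $S=S_0\subset S_1\subset S_2\subset\cdots$ of separable subspaces recursively: given $S_{k-1}$, use cofinality of $\F_{n_k}$ to pick $S_k\in\F_{n_k}$ with $S_{k-1}\subset S_k$, noting that $S_k$ is separable because $S_k\in\S(X)$. Finally set $F:=\overline{\bigcup_{k}S_k}$, which contains $S$ and is separable, being the closure of a countable increasing union of separable subspaces.

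It remains to check that $F\in\F_n$ for every $n$, and this is where the choice of enumeration pays off. Fix $n$ and let $k_1<k_2<\cdots$ enumerate the infinite set $\{k\setsep n_k=n\}$. The subspaces $S_{k_1}\subset S_{k_2}\subset\cdots$ form an increasing sequence in $\F_n$, so $\overline{\bigcup_j S_{k_j}}\in\F_n$ by $\sigma$-closedness of $\F_n$. Because $(k_j)$ is cofinal in $\en$ and the $(S_k)$ are increasing, the subsequence $(S_{k_j})$ is cofinal among all the $S_k$, whence $\bigcup_j S_{k_j}=\bigcup_k S_k$ and therefore $\overline{\bigcup_j S_{k_j}}=F$. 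Thus $F\in\F_n$ for each $n$, i.e. $F\in\G$, which gives cofinality and completes the proof.

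The only genuine obstacle here is organizational rather than technical: one must interleave the absorptions so that each family $\F_n$ is invoked infinitely often, so that the tail of the single sequence $(S_k)$ is simultaneously captured by the $\sigma$-closedness of every $\F_n$. Once the enumeration is set up correctly, the verification is routine, and separability is preserved automatically at each stage.
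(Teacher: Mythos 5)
Your proof is correct and is essentially the standard argument; the paper itself gives no proof but only cites \cite[Proposition 3.1]{LiMo08}, and the diagonal interleaving you describe (absorbing into each $\F_n$ infinitely often and then invoking $\sigma$-closedness of each $\F_n$ on the cofinal subsequence) is exactly the argument used there. No gaps: $\sigma$-closedness of the intersection is indeed immediate, and your cofinality construction is sound.
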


Another concept arises from the ``method of suitable models'' (sometimes called also ``method of elementary submodels'') used e.g. in \cite{c, cr, crz}.

\begin{definition}Let $X$ be a topological space. We say $\F\subset S(X)$ is \emph{large in the sense of suitable models} if there exists a finite list of formulas $\Phi$ and a countable set $Y$ such that \[\F = \{\overline{X\cap M}\setsep M\text{ is a suitable model for $\Phi$ containing $Y$}\}.\]
\end{definition}
 
\noindent We refer to the next section where more details about the concept of suitable models may be found.

Our first main result is that when dealing with a Banach space, those two concepts are equivalent when dealing with separable determination, that is, the existence of a rich family $\F$ satisfying $(\maltese)$ is equivalent to the existence of a family $\F'$ large in the sense of suitable models satisfying $(\maltese)$.

\begin{thm}\label{t:main1}Let $X$ be a Banach space and let us have $\F\subset\S(X)$.
	\begin{itemize}
		\item[(i)] If $\F$ is rich, then there exists $\F'\subset \F$ which is large in the sense of suitable models.
		\item[(ii)] If $\F$ is large in the sense of suitable models, then there exists $\F'\subset \F$ which is rich.
	\end{itemize}
Moreover the statement (i) holds also if $X$ is an arbitrary topological space.
\end{thm}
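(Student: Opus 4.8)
The plan is to treat the two implications through a single conceptual bridge: a rich family is, up to passing to a subfamily, nothing but the collection of separable subspaces closed under one countably-coded closure operation, and such closure operations are exactly what suitable models generate on their traces $\overline{X\cap M}$. The two directions then amount to manufacturing such an operation out of a rich family, respectively to recognizing it inside a family large in the sense of suitable models.

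For (i) I would start from a rich $\F$ and build a closure operation from it. Using cofinality together with a choice, fix a map $R\colon\S(X)\to\F$ with $S\subseteq R(S)$ for every $S$; the role of $\sigma$-closedness is that iterating $R$ along $\en$ and closing the union lands back in $\F$. I would then put the data $(X,\F,R)$ into the parameter: choose a countable $Y$ with $\{X,\F,R\}\subseteq Y$ and a finite list $\Phi$ expressing the defining properties of $R$ and of $\F$. For any suitable $M\supseteq Y$ the trace $\overline{X\cap M}$ is forced to be $R$-closed, and, writing it as the closure of an increasing union of sets $R(S)$ with $S$ coded in $M$, $\sigma$-closedness yields $\overline{X\cap M}\in\F$. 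Thus $\F'=\{\overline{X\cap M}\setsep M\text{ suitable for }\Phi,\ Y\subseteq M\}\subseteq\F$, and $\F'$ is large in the sense of suitable models by definition. Since this uses only closed separable subspaces and a choice function on them — no linear structure — it goes through verbatim for an arbitrary topological $X$, which is the ``moreover''.

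For (ii) I would go the other way. Starting from $\F=\{\overline{X\cap M}\setsep M\text{ suitable for }\Phi,\ Y\subseteq M\}$, enlarge $\Phi$ to a finite list $\Phi'$ and $Y$ to $Y'\supseteq Y$ that in addition force the algebraic structure to be reflected, so that for suitable $M$ the trace $X\cap M$ is a $\qe+i\qe$-linear subset and $\overline{X\cap M}=\closedSpan (X\cap M)$. Because $\Phi'\supseteq\Phi$ and $Y'\supseteq Y$, every model suitable for $\Phi'$ is suitable for $\Phi$, so the resulting family $\F'=\{\overline{X\cap M}\setsep M\text{ suitable for }\Phi',\ Y'\subseteq M\}$ is contained in $\F$; it remains to prove $\F'$ rich. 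Cofinality is immediate from the existence principle for suitable models: any separable $S$ has a countable dense set contained in some suitable $M\supseteq Y'$, whence $S\subseteq\overline{X\cap M}$. For $\sigma$-closedness I would take an increasing sequence $F_i=\overline{X\cap M_i}$ in $\F'$, set $Z=\overline{\bigcup_i F_i}$, pick countable dense $D_i\subseteq F_i$, and build a single suitable $N\supseteq Y'$ by closing $Y'\cup\bigcup_i D_i$ under the finitely many witnessing functions of $\Phi'$, using the standard fact that an increasing union of suitable models is suitable.

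The main obstacle is precisely this last step: a suitable model is closed under its witnessing functions, and a priori these could produce points of $X$ lying outside $Z$, so that $\overline{X\cap N}$ overshoots $Z$. The finiteness of $\Phi'$ is what saves the day, and the technical heart is to arrange $\Phi'$ so that the operation $X\cap M\mapsto\overline{X\cap M}$ is governed by a genuine function $[X]^{<\omega}\to\S(X)$ — that is, the $X$-valued witnesses depend only on $X$-valued (and fixed) parameters, with no feedback through the auxiliary non-vector elements of the model. Once the closure is captured in this form, $Z$ inherits closedness from the $F_i$ (a finite tuple from $\bigcup_i F_i$ already lies in some $F_i$), the witnesses at every stage can be kept inside $Z$, and the resulting $N$ satisfies both $X\cap N\subseteq Z$ and $\bigcup_i D_i\subseteq X\cap N$, giving $\overline{X\cap N}=Z\in\F'$. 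This same reformulation underlies the encoding in (i), so isolating it is the crux common to both directions.
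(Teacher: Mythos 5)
Your part (i) is sound and is essentially the paper's argument (which simply invokes \cite[Proposition 3.1]{cuthKalenda}): code a cofinal choice function $R\colon\S(X)\to\F$ and an assignment of countable dense subsets $F\mapsto D_F$ into the countable parameter set, and use $\sigma$-closedness on the resulting increasing chain inside $\overline{X\cap M}$. Your observation that no linear structure is used gives the ``moreover'' part exactly as in the paper.

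Part (ii), however, has a genuine gap at the $\sigma$-closedness step; you have correctly located the obstacle but not closed it. Two concrete problems. First, the claim that $\Phi'$ can be arranged so that ``the $X$-valued witnesses depend only on $X$-valued (and fixed) parameters, with no feedback through the auxiliary non-vector elements'' is unjustified and not available in this generality: $\Phi$ is an \emph{arbitrary} finite list of formulas, and its Skolem witnesses lying in $X$ may require parameters outside $X$ (ordinals, functionals, auxiliary functions) which are themselves generated from $X\cap M$, so the feedback you want to exclude is really present. Second, even granting such a reduction, an increasing sequence $F_i=\overline{X\cap M_i}$ only gives $D_i\subset F_i\subset F_j$ for $j\ge i$; it does \emph{not} give $D_i\subset X\cap M_j$, so a witness applied to a tuple mixing points of different $D_i$'s is not controlled by any single model $M_j$, and nothing forces it into $Z$. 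The paper resolves this by a structurally different device: Theorem~\ref{t:gener} produces a family $\M$ of suitable models satisfying the rigidity condition $M\subset N\iff X_M\subset X_N$, so that an increasing sequence of traces automatically arises from an increasing chain of models, whose union is again a suitable model with the correct trace. That rigidity is obtained not by massaging $\Phi$, but by transporting the whole problem to a Hilbert space via Toru\'nczyk's homeomorphism theorem and invoking \cite[Theorem 2.7]{cuthKalenda}, where an orthonormal basis (a generating set $I$ with $i\notin\closedSpan(I\setminus\{i\})$) lets one read the model off its trace in an $\omega$-monotone way. Some substitute for this rigidity --- and for the reduction to a space that possesses it --- is the missing idea in your proposal.
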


The statement (i) follows actually from the proof of \cite[Proposition 3.1]{cuthKalenda}. The statement (ii) is proved in Secton \ref{section:richVsSuitable}, actually it was almost proved in \cite{cuthKalenda}; however, the authors did not realize it and this lead them to formulate three questions, two of which we answer here, see Section \ref{section:richVsSuitable}. Even though the proof of (ii) itself is quite short, the implication seems to be important and it can be considered as the main result of the whole paper. Let us comment it. When dealing with simple statements, the proof through both concepts (rich families and suitable models) is more or less of the same difficulty. However, the strength of the concept of suitable models reveals when dealing with complicated proofs where inductive construction (used when constructing separable space using standard methods) would be tough. This is the case e.g. of $\sigma$-porosity, where separable determination was not known in any sense for quite a long time (e.g. in the book of J. Lindenstrauss, D. Preiss an J. Ti\v{s}er the authors proved one implication using the concept of rich families \cite[Corollary 3.6.7]{LiPrTi}, but not the equivalence) and it has been recently obtained using the concept of suitable models \cite{cr}, \cite{crz}. The concept of suitable models seems to be a powerful tool. However, the formulation of corresponding separable determination theorems involves notions from set theory and logic, which makes the results less applicable by non experts. Therefore, it is important to know that from a separable determination statement proved using the concept of suitable models a statement formulated in the language of rich families follows. This is exactly what Theorem \ref{t:main1} says. For example, it enables us to reformulate the above mentioned results concerning e.g. $\sigma$-porosity in the language of rich families. Some of those results have already been applied (see e.g. \cite[Proposition~CR]{vz}) and since ``$\sigma$-porosities'' naturally occur in the theory of separable Banach spaces, we hope that the result below will enable more authors to prove their theorems in a non-separable setting using separable reduction. We apply (ii) in Theorem~\ref{t:main1} to \cite[Theorem~5.10]{crz}, \cite[Theorems~5.1 and 5.4]{cr} and \cite[Theorem~4.7]{c}. Recall that every Borel set is Suslin; hence, our result applies for example to Borel sets.

\begin{corollary}\label{c:application}
Let $X$ be a Banach space and $A \subset X$ be a Souslin set.
Then there exists a rich family $\F\subset\S(X)$ such that for every $V\in\F$ we have
\begin{align*}
A \text{ is meager in the space } X           &\iff A \cap V \text{ is meager in the subspace } V,\footnotemark[1]\\
A \text{ is $\sigma$-upper porous in the space } X           &\iff A \cap V \text{ is $\sigma$-upper porous in the subspace } V, \\
A \text{ is $\sigma$-lower porous in the space } X           &\iff A \cap V \text{ is $\sigma$-lower porous in the subspace } V,
\end{align*}
Moreover, if $X$ is Asplund and $\alpha\in [0,1)$, there exists a rich family $\F\subset\S(X)$ such that for every $V\in\F$ we have
\begin{align*}
A \text{ is $\sigma$-$\alpha$-cone porous in the space } X  &\iff A \cap V \text{ is $\sigma$-$\alpha$-cone porous in the subspace } V,\footnotemark[2] \\
A \text{ is cone small in the space } X                     &\iff A \cap V \text{ is cone small in the subspace } V.
\end{align*}
\end{corollary}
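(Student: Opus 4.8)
The plan is to reduce the corollary to the cited separable determination results, which are already formulated in the language of suitable models, to transfer each of them to the language of rich families by means of Theorem~\ref{t:main1}(ii), and finally to combine the resulting families using Proposition~\ref{p:boMO}. The point is that all the genuine mathematical work has been done elsewhere: for each of the five notions in question the corresponding separable determination has already been established via the method of suitable models in \cite{crz}, \cite{cr} and \cite{c} (concretely \cite[Theorem~5.10]{crz}, \cite[Theorems~5.1 and 5.4]{cr} and \cite[Theorem~4.7]{c}).

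First I would observe that each of these results, upon fixing a suitable finite list of formulas $\Phi$ and a countable set $Y$ encoding the data (this being legitimate precisely because $A$ is Souslin), produces a family
\[\F_0 = \{\overline{X\cap M}\setsep M\text{ is a suitable model for }\Phi\text{ containing }Y\}\]
every member $V$ of which satisfies the corresponding instance of $(\maltese)$, i.e. the displayed equivalence between the property holding for $A$ in $X$ and the same property holding for $A\cap V$ in the subspace $V$. By definition such an $\F_0$ is large in the sense of suitable models.

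Next I would apply Theorem~\ref{t:main1}(ii) to each such $\F_0$ in order to obtain a rich family $\F_0'\subset\F_0$. Since $\F_0'\subset\F_0$ and every element of $\F_0$ already witnesses the relevant equivalence, every $V\in\F_0'$ witnesses it as well; thus for each single notion we have a rich family realizing the corresponding equivalence. To obtain one rich family realizing all three equivalences of the first block simultaneously, I would intersect the three rich families so produced: by Proposition~\ref{p:boMO} the intersection is again rich, and each of its members inherits all three equivalences. The Asplund block is treated identically, by intersecting the two rich families associated with $\sigma$-$\alpha$-cone porosity and cone-smallness.

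I expect the only delicate points to be bookkeeping ones rather than substantial ones. The main thing to verify is that each cited theorem genuinely delivers a family of the exact form required by the definition of largeness in the sense of suitable models, and in particular that the finitely many parameters needed (the set $A$ itself, the relevant rationals, a fixed dense sequence, and whatever auxiliary data the porosity arguments use) can all be absorbed into a single countable set $Y$ and a single finite list of formulas $\Phi$; likewise one must check that the Souslin hypothesis on $A$ is precisely the hypothesis under which those theorems are stated, so that it transfers without loss. Once this matching of definitions is confirmed, the corollary follows formally.
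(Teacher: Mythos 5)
Your proposal is correct and follows exactly the route the paper intends: the corollary is obtained by applying Theorem~\ref{t:main1}(ii) to the families large in the sense of suitable models furnished by the cited theorems of \cite{crz}, \cite{cr} and \cite{c}, and then intersecting the resulting rich families via Proposition~\ref{p:boMO}. The only bookkeeping point you leave implicit is the one the paper records in its second footnote, namely that for $\sigma$-$\alpha$-cone porosity with irrational $\alpha$ one uses that the suitable model may be chosen after $\alpha$ is fixed, so that $\alpha\in M$ can be assumed.
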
\footnotetext[1]{It is known to the author that it is not to difficult to prove the statement about meagerness using classical methods (one implication follows from \cite[Lemma 4.6]{z}, see also \cite[page 44]{LiPrTi}); however, we did not find any reference for that (even though we believe this is a common knowledge for people working with rich families). Therefore, we believe it is of some importance to mention it here.}
\footnotetext[2]{In \cite[Theorem 5.10]{crz} the statement concerning $\sigma$-$\alpha$-cone porosity is formulated only for rational $\alpha$; however, the same proof gives us the result for $\alpha\in M$ and, since the family is constructed after $\alpha$ is chosen, we may assume that we have $\alpha\in M$}

We suggest one more approach to theorems on separable determination using the notion of $\omega$-monotone mappings, a concept which has been considered already in topology, see e.g. \cite{rojTka}.

\begin{definition}\label{def:omega}Given two infinite sets $I$ and $J$, a function $\phi:[I]^{\leq\omega}\to [J]^{\leq\omega}$ is called \emph{$\omega$-monotone} provided that:
	\begin{itemize}
  	\item[(i)] $\phi$ is \emph{monotone}, that is, if $A\subset B$ are countable subsets of $I$, then $\phi(A)\subset\phi(B)$;
  	\item[(ii)] if $(A_n)$ is an increasing sequence of countable subsets of $I$, then $\phi(\bigcup_{n=1}^\infty A_n)=\bigcup_{n=1}^\infty \phi(A_n)$.
 \end{itemize}
\end{definition}

\begin{definition}Let $X$ be a topological space. We say $\F\subset S(X)$ is \emph{large in the sense of $\omega$-monotone mappings} if there exists an $\omega$-monotone mapping $\phi:[X]^{\leq\omega}\to [X]^{\leq\omega}$ such that $\phi(C)\supset C$ for every $C\in[X]^{\leq\omega}$ and \[\F = \{\overline{\phi(C)}\setsep C\in[X]^{\leq\omega}\}.\] We say $\phi$ is a \emph{witnessing map} for $\F$.
\end{definition}

\noindent It is quite easy to see that the intersection of finitely many families large in the sense of $\omega$-monotone mappings is nonempty, see Proposition \ref{p:combineMon}. The advantage of this new approach to separable determination statements when compared with ``suitable models'' and ``rich families'' is the following. First, it does not require any knowledge of set theory or logic. Next, when compared with the concept of rich families, one does not have to check the $\sigma$-closeness, which in some cases might cause difficulties --- as an example one might have a look at the proof of Proposition \ref{p:redukceGL} where it seems to us to be unclear whether the family $\F$ constructed in the proof is rich. 

Our second main result is that when dealing with a Banach space, this concept is in a sense equivalent to the previous ones.

\begin{thm}\label{t:main2}Let $X$ be a Banach space and let us have $\F\subset\S(X)$.
	\begin{itemize}
		\item[(i)] If $\F$ is rich, then there exists $\F'\subset \F$ which is large in the sense of $\omega$-monotone mappings.
		\item[(ii)] If $\F$ is large in the sense of $\omega$-monotone mappings, then there exists $\F'\subset \F$ which is rich.
	\end{itemize}
\end{thm}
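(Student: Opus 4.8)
The plan is to deduce both parts of Theorem~\ref{t:main2} from Theorem~\ref{t:main1}, exploiting the fact that the ``suitable model'' machinery produces $\omega$-monotone maps and, conversely, that an $\omega$-monotone map can be absorbed into the model structure. The bridge in both directions is the elementary observation that an $\omega$-monotone map is \emph{finitary}: applying property~(ii) of Definition~\ref{def:omega} to the increasing sequence $A_n=\{c_1,\dots,c_n\}$ yields $\phi(C)=\bigcup\{\phi(A)\setsep A\subset C\text{ finite}\}$ for every countable $C=\{c_1,c_2,\dots\}$. Thus closure of a set under $\phi$ is witnessed by finite subsets, which is exactly what makes $\phi$ compatible with countable elementary submodels.

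For part~(i) I would first apply Theorem~\ref{t:main1}(i) to obtain $\F''\subset\F$ large in the sense of suitable models, say $\F''=\{\overline{X\cap M}\setsep M\text{ suitable for }\Phi,\ M\supset Y\}$, and then check that every such family is large in the sense of $\omega$-monotone mappings. For a countable $C\subset X$ let $M(C)$ denote the smallest suitable model containing $C\cup Y$ (the Skolem hull) and put $\phi(C):=X\cap M(C)$, so that $C\subset\phi(C)$. Monotonicity is immediate from $C_1\subset C_2\Rightarrow M(C_1)\subset M(C_2)$, and property~(ii) follows because the hull of an increasing union of countable sets is the union of the hulls (each element of the hull uses only finitely many Skolem functions applied to finitely many generators, hence already appears at a finite stage). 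Finally one verifies $\{\overline{\phi(C)}\setsep C\in[X]^{\leq\omega}\}=\F''$: for $C=X\cap M$ one has $X\cap M(C)=X\cap M$, so $\overline{\phi(C)}=\overline{X\cap M}$, and conversely every $\overline{\phi(C)}$ is of this form. Hence $\F''\subset\F$ is large in the sense of $\omega$-monotone mappings.

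For part~(ii), let $\phi$ be a witnessing map for $\F$. The idea is to feed $\phi$ into the model structure so as to realize a suitable-models subfamily of $\F$, and then invoke Theorem~\ref{t:main1}(ii). Working in a large enough structure $(H(\theta),\in)$, I would enlarge the countable parameter set $Y$ so that it contains $\phi$ (together with $X$, the scalar field and the vector operations), and choose a finite list of formulas $\Phi$ guaranteeing that each suitable model $M\supset Y$ is a $\mathbb{Q}$-linear (or $\mathbb{Q}+i\mathbb{Q}$-linear) subset and is closed under $\phi$. Using the finitary description of $\phi$ together with elementarity, for such an $M$ and any finite $A\subset X\cap M$ one has $A\in M$, hence $\phi(A)\in M$ is a countable set enumerated inside $M$, so $\phi(A)\subset M$ and, as $\phi$ takes values in subsets of $X$, in fact $\phi(A)\subset X\cap M$; taking the union over finite $A\subset X\cap M$ gives $\phi(X\cap M)=X\cap M$. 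Consequently $\overline{X\cap M}=\overline{\phi(X\cap M)}\in\F$, so the suitable-models family $\F_{sm}:=\{\overline{X\cap M}\setsep M\text{ suitable for }\Phi,\ M\supset Y\}$ satisfies $\F_{sm}\subset\F$, and Theorem~\ref{t:main1}(ii) yields a rich $\F'\subset\F_{sm}\subset\F$.

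The step I expect to be the main obstacle is the verification in part~(ii) that membership in a suitable model forces closure under $\phi$, i.e.\ that $X\cap M$ is a genuine fixed point of $\phi$. This is precisely where the finitary nature of $\omega$-monotone maps is essential: because $\phi(C)$ is determined by its values on the finite subsets of $C$, the countable set $X\cap M$---rather than its norm closure---can be closed under $\phi$ inside a countable model. A direct attempt to prove richness of $\{\overline{\phi(C)}\}$ without models runs into the difficulty that applying $\phi$ to an arbitrary countable dense subset of a member $\overline{\phi(C)}\in\F$ need not stay inside that subspace (nothing in Definition~\ref{def:omega} ties $\phi$ to the topology), so that $\sigma$-closedness is not visible directly; routing the argument through elementary submodels sidesteps this, since elementarity supplies a countable, norm-dense, $\phi$-closed generating set for each member of the family.
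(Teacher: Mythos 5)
Your proposal is correct in substance but takes a genuinely different route from the paper, which proves the two halves directly and separately, without passing through Theorem~\ref{t:main1}. Part (i) in the paper is Theorem~\ref{t:rich->mon}: a purely combinatorial construction (choose $F_x\in\F$ containing $x$, majorants $F_{G,H}\supset G\cup H$, and countable dense sets $D_F$, then iterate to obtain a countable up-directed subfamily of $\F$; $\sigma$-closedness of $\F$ puts the closure of its union back into $\F$). This works in an arbitrary topological space and uses no set theory. Part (ii) is Theorem~\ref{t:mon->rich}: after reducing to a Hilbert space via Toru\'nczyk's theorem, the author verifies $\sigma$-closedness directly by tracking the coordinate sets $\A(A)\cap I$ for a set $I$ with $i\notin\closedSpan(I\setminus\{i\})$ --- exactly the obstacle you identify in your closing paragraph is overcome this way, so your claim that a model-free proof of richness ``runs into difficulty'' is too pessimistic. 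Your route for (ii) --- using the finitary character of $\omega$-monotone maps (the paper's Lemma~\ref{l:podzobrazeni}) to show $X\cap M$ is a fixed point of $\phi$, hence $\overline{X\cap M}\in\F$, and then invoking Theorem~\ref{t:gener} --- is precisely the alternative the paper sketches in the paragraph preceding Theorem~\ref{t:mon->rich} and deliberately declines ``to make the argument more transparent for non-experts in set theory''; your version is shorter and makes the three-way equivalence conceptually uniform, at the cost of importing the model machinery into both directions. Two repairs are needed. First, in (i) there is no ``smallest suitable model containing $C\cup Y$'': absoluteness is not preserved under intersections, so $M(C)$ must be defined as the closure of $C\cup Y$ under a \emph{fixed} system of Skolem functions (as in the proof of \cite[Proposition 3.1]{cuthKalenda}); that operator is then $\omega$-monotone. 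Second, your claimed equality $\{\overline{\phi(C)}\setsep C\in[X]^{\leq\omega}\}=\F''$ is unjustified: for $C=X\cap M$ the hull $M(C)$ need not be contained in $M$, since $M$ is only assumed absolute for $\Phi$, not closed under your chosen Skolem functions. Only the inclusion $\subset$ holds, but that is all the theorem requires.
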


Finally, in Section~\ref{section:GL} we prove a new separable determination result.

 \begin{thm}\label{t:redukceGL}Let $X$ be a Banach space. Then there exists a rich family $\F\subset\S(X)$ such that for every $V\in\F$ we have
 	\[
		X\text{ is generalized lush }\Longrightarrow V\text{ is generalized lush}.
	\]
 Moreover, if $X$ is an Asplund space, there exists a rich family $\F\subset\S(X)$ such that for every $V\in\F$ we have
	\[
		X\text{ is generalized lush }\iff V\text{ is generalized lush}.
	\]
\end{thm}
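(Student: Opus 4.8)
\emph{Strategy.} The plan is to build the desired family not directly as a rich family --- for generalized lushness the verification of $\sigma$-closedness seems problematic --- but as a family large in the sense of $\omega$-monotone mappings, and then to invoke Theorem~\ref{t:main2}(ii) to extract a rich subfamily. Recall that $X$ is generalized lush if for every $x\in S_X$ and every $\varepsilon>0$ there is $x^*\in S_{X^*}$ with $x^*(x)>1-\varepsilon$ such that $\dist(y,\aco S(x^*,\varepsilon))<\varepsilon$ for every $y\in S_X$, where $S(x^*,\varepsilon)=\{z\in B_X\setsep x^*(z)>1-\varepsilon\}$. Throughout, $V=\overline{\phi(C)}$ denotes a typical member of the family, with $\phi$ the witnessing $\omega$-monotone map to be constructed; I will fix once and for all the choice functions producing the various witnesses below, so that $\phi$ is defined on all of $[X]^{\leq\omega}$ regardless of whether $X$ is generalized lush.

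\emph{The implication $X$ gen.\ lush $\Rightarrow V$ gen.\ lush.} First I would define $\phi$ so that $\phi(C)$ contains $C$, is closed under $\qe$-linear combinations, and, for every pair $x,y\in\qespan C$ and every rational $\varepsilon>0$ for which a generalized-lush witness for $(x,\varepsilon)$ exists in $X$, contains a fixed such witnessing configuration: a functional $x^*$ together with finitely many points $z_1,\dots,z_n\in S(x^*,\varepsilon)$ realising $\dist(y,\aco S(x^*,\varepsilon))<\varepsilon$. Since $\qespan C$ is countable, $\phi(C)$ is countable, and monotonicity together with preservation of increasing unions are routine, so $\phi$ is $\omega$-monotone. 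Assuming $X$ is generalized lush, I would then check that $V$ is generalized lush by a density-and-continuity argument: given $v\in S_V$ and $\varepsilon>0$, approximate $v$ by some $x\in\qespan C$, take the recorded witness $x^*$ for a slightly smaller parameter, and use $v^*:=x^*|_V/\|x^*|_V\|$; the points $z_i$ added to $\phi(C)$ lie in $V$ and in the slice, so the absolutely convex hull of the $V$-slice approximates every $y$ in a dense subset of $S_V$, hence every $y\in S_V$. The only care needed is the $\varepsilon$-bookkeeping coming from normalising $x^*|_V$ and from passing between $x$ and $v$.

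\emph{The equivalence in the Asplund case.} For the reverse implication $V$ gen.\ lush $\Rightarrow X$ gen.\ lush I would use that, when $X$ is Asplund, every separable $V$ has separable dual $V^*$. The difficulty is that generalized lushness of $V$ yields, for the Hahn--Banach extension $x^*\in S_{X^*}$ of a $V$-witness, only that $\aco S(x^*,\varepsilon)$ approximates $y\in S_V$, whereas one needs this for every $y\in S_X$; dually, one needs $\sup_{z\in S(x^*,\varepsilon)}|w^*(z)|\ge 1-\varepsilon$ for every $w^*\in S_{X^*}$, while $V$ only controls those $w^*$ whose norm is almost attained on $V$. To overcome this I would enlarge $\phi$ on the dual side: using separability of $V^*$ I would record a countable set of functionals $w^*\in S_{X^*}$ dense enough to detect the norm, and for each such $w^*$ and each recorded slice add to $\phi(C)$ a point of the slice on which $|w^*|$ is nearly one. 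Asplundness is exactly what keeps this dual bookkeeping countable, and a $w^*$-lower-semicontinuity argument then upgrades the attainment from the recorded functionals to all of $S_{X^*}$, yielding the aco condition in $X$.

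\emph{Main obstacle.} The main obstacle is precisely this reverse direction: guaranteeing that the slice $S(x^*,\varepsilon)$ computed in the large space $X$ is big enough to have its absolutely convex hull $\varepsilon$-dense in $B_X$, starting only from the corresponding, a priori much smaller, statement inside the separable subspace $V$. This is where Asplundness is indispensable, and it is also the reason for not attempting to produce a rich family directly: the family obtained from the construction is manifestly cofinal and closed under the relevant increasing unions, hence large in the sense of $\omega$-monotone mappings, but its $\sigma$-closedness is unclear, and richness is obtained only a posteriori through Theorem~\ref{t:main2}(ii).
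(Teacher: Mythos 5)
Your overall architecture for the first half --- build a family large in the sense of $\omega$-monotone mappings by recording, for each pair from a countable $\qe$-linear set and each rational $\varepsilon$, a fixed witnessing configuration, then extract a rich subfamily via Theorem~\ref{t:main2}(ii) --- is exactly the paper's Proposition~\ref{p:redukceGL}, and that half is sound in outline. Two things go wrong, however. First, you are aiming at the wrong property: generalized lushness requires $\dist(y,S(x^*,\varepsilon))+\dist(y,-S(x^*,\varepsilon))<2+\varepsilon$ for all $y\in S_X$, whereas the condition $\dist(y,\aco S(x^*,\varepsilon))<\varepsilon$ you ``recall'' is the formally stronger property of Remark~\ref{r:lushIsGLOriginalProof}; the two are not known to be equivalent. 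This is repairable in the forward direction (record the two points $I_2,I_3$ realizing the two distances, as the paper does), but it infects all of your $\varepsilon$-bookkeeping and, more seriously, your dual argument in the second half.

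The second problem is structural and is a genuine gap. You attempt the Asplund direction as a direct implication ``$V$ is (GL) $\Rightarrow X$ is (GL)'', extending $V$-witnesses to $X$. For a fixed $V$ this cannot work: the (GL) condition in $X$ quantifies over all $x\in S_X$, and for $x\notin V$ the generalized lushness of $V$ produces no candidate functional whatsoever --- your sketch only ever addresses the $y$-quantifier. Moreover, your proposed fix (recording, for each norming $w^*$ and each slice, a point of $S(x^*,\varepsilon)$ on which $|w^*|$ is nearly one) presupposes that such points exist in $X$, i.e., presupposes the very largeness of the slice you are trying to establish; and the set of $w^*$ needed to norm the nonseparable space $X$ is uncountable, since Asplundness gives separability of $V^*$, not a countable norming set for $X^*$. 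The paper's Proposition~\ref{p:redukceGLAsplund} runs the argument contrapositively: if $X$ is not (GL), fix $x_0\in S_X$ and $\varepsilon_0>0$ witnessing the failure, pre-select for each $x^*\in X^*$ a ``bad point'' $I(x^*)\in S_X$, and use an Asplund generator $G$ together with the rich family of \cite{cuthFab} on which $\closedSpan G(C)\ni x^*\mapsto x^*|_{\closedSpan C}$ is a surjective isometry onto $V^*$. Then (via Lemma~\ref{l:notGL}) every relevant functional on $V$ is, up to density, the isometric restriction of some $x^*\in G(C)$, the recorded point $I(x^*)$ lies in $V$ by construction, and since distances to $S(x^*,\varepsilon_0)\cap B_V$ dominate distances to $S(x^*,\varepsilon_0)$, the failure passes down to $V$. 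Note that this monotonicity of distances points the right way only for the contrapositive; that reversal of logic is the idea your proposal is missing.
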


This result could be also considered as a raison d'etre for Theorem \ref{t:main2}, because we do not know of a direct argument which would give us a rich family and so in our argument we consider also a family which is large in the sense of $\omega$-monotone mappings, see Proposition \ref{p:redukceGL}. Using Theorem~\ref{t:redukceGL}, we reprove the result that every Asplund lush space is generalized lush. We refer to Section~\ref{section:GL} for the definition and some more details concerning the (generalized) lushness. At the 43\textsuperscript{rd} Winter School of Abstract Analysis, J.-D. Hardtke asked whether every nonseparable lush Banach space is generalized lush. Up to our knowledge, this question is still open.

\begin{question}\label{q1}
Is every lush Banach space generalized lush?
\end{question}

\begin{question}\label{q2}
Is it true that generalized lushness is separably determined (i.e. do we have an equivalence in Theorem~\ref{t:redukceGL} not assuming $X$ is Asplund)?
\end{question}

Note that positive answer to Question~\ref{q2} would imply positive answer to Question~\ref{q1}.\\[3pt]

Let us fix some notations. The set of rational numbers is denoted by $\qe$. For an infinite set $M$ the symbol $[M]^{\,\le\omega}$ means the family of all at most countable subsets of $M$. All linear spaces are over the field $\er$. Let $(X,\|\cdot\|)$ be a Banach space. We denote by $X^*$ its dual, by $B_X$ its closed unit ball and by $S_X$ its unit sphere. For a set $A\subset X$ the symbols $\aco A$, $\spn A$ and $\closedSpan A$, and $\qespan A$ mean the absolutely convex hull of $A$, the linear span of $A$, the norm-closed linear span of $A$ and the set consisting of all finite linear combinations of elements in $A$ with rational coefficients, respectively.

\section{Comparision of the concept of rich families with the concept of suitable models}\label{section:richVsSuitable}

In this section we compare the concept of rich families with the concept of suitable models. Our main result is that in Banach spaces both concepts are equivalent, see Theorem \ref{t:main1}. Actually, we prove even something more, namely that suitable models generate nice rich families in any Banach space, see Theorem \ref{t:gener}. This gives a positive answer to \cite[Question 2.8]{cuthKalenda} and implies a positive answer to \cite[Question 3.6]{cuthKalenda}.

The language of suitable models allows the central ideas to emerge from what would otherwise be a mass of technical details. The method of replacing an inductive construction by ``suitable model'' was used in topology already in 1988 by A. Dow \cite{dow}, in the Banach space theory in 2005 by P. Koszmider \cite{kos}, later in 2009 by W. Kubi{\'s} \cite{kubis} and even later in 2012 by the author \cite{c} who simplified its presentation to the form which we will use here.

Let us recall some basics concerning the method of suitable models. A brief description of it can be found in \cite{crz}; for a more detailed one, see \cite{c}. Let $N$ be a fixed set and $\phi$ a formula in the basic language of the set theory. By the \emph{relativization of $\phi$ to $N$} we understand the formula $\phi^N$ which is obtained from $\phi$ by replacing each chain of the form ``$\forall x$'' by ``$\forall x \in N$'' and each chain of the form ``$\exists x$'' by ``$\exists x \in N$''. Let $\phi(x_1,\ldots,x_n)$ be a formula with all free variables shown, that is, a formula whose free variables are exactly $x_1,\ldots,x_n$. We say \emph{$\phi$ is absolute for $N$} if
\[
\forall a_1, \ldots, a_n \in N\colon \bigl(\phi^N(a_1,\ldots,a_n) \leftrightarrow \phi(a_1,\ldots,a_n)\bigr).
\]
The method is based mainly on the following theorem (a proof can be found in \cite[Chapter IV, Theorem 7.8]{k}).
The cardinality of a set $A$ is denoted by $|A|$.

\begin{thm}\label{T:countable-model}
Let $\phi_1, \ldots, \phi_n$ be any formulas and $X$ be any set. Then there exists a set $M \supset X$ such that
$\phi_1, \ldots, \phi_n \text{ are absolute for } M$ and $|M| = \max\,(\aleph_0,|X|)$.
\end{thm}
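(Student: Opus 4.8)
The plan is to obtain $M$ by combining two classical ingredients from set theory: the L\'evy--Montague reflection theorem, which produces a genuine \emph{set} over which the finitely many formulas are absolute, and the downward L\"owenheim--Skolem theorem, which then shrinks that set to the prescribed cardinality while preserving the relevant absoluteness. As a harmless first reduction I would close the given list under subformulas; this keeps the list finite and lets me concentrate on formulas of the shape $\exists x\,\psi$, which are the only ones whose relativization can fail to agree with the unrelativized version.

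For the reflection step I would invoke the reflection principle to find an ordinal $\alpha$ with $X\subseteq V_\alpha$ such that $\phi_1,\ldots,\phi_n$ are absolute between $V_\alpha$ and the universe $V$, that is, $\phi_i^{V_\alpha}(\bar a)\leftrightarrow\phi_i(\bar a)$ for all parameters $\bar a\in V_\alpha$. That $\alpha$ may be taken with $X\subseteq V_\alpha$ is automatic, since every set appears at some level of the cumulative hierarchy and reflection permits $\alpha$ to be chosen arbitrarily large.

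The heart of the argument is the L\"owenheim--Skolem step, now carried out \emph{inside the set} $(V_\alpha,\in)$. For each existential subformula $\exists x\,\psi(x,\bar y)$ I would use the axiom of choice on the set $V_\alpha$ to fix a Skolem function $F_\psi$ that assigns to each tuple $\bar a$ from $V_\alpha$ some witness $x\in V_\alpha$ with $\psi(x,\bar a)$, whenever such a witness exists. I then build $M$ as the closure of $X$ under this finite collection of Skolem functions: setting $M_0=X$ and $M_{k+1}=M_k\cup\{F_\psi(\bar a)\setsep \psi \text{ an existential subformula},\ \bar a\in M_k^{<\omega}\}$, I put $M=\bigcup_{k<\omega}M_k$. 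The Tarski--Vaught criterion then yields $M\prec(V_\alpha,\in)$, so $\phi_i^M(\bar a)\leftrightarrow\phi_i^{V_\alpha}(\bar a)$ for all $\bar a\in M$. The cardinality stays controlled at every stage: there are only finitely many Skolem functions, each with finitely many argument places, so $|M_{k+1}|\le\max(\aleph_0,|M_k|)$, and taking the union over $\omega$ stages gives $|M|=\max(\aleph_0,|X|)$. Chaining this elementarity with the reflection of the previous step produces $\phi_i^M(\bar a)\leftrightarrow\phi_i(\bar a)$ for every $\bar a\in M$, which is precisely absoluteness of $\phi_1,\ldots,\phi_n$ for $M$.

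The step I expect to be the main obstacle is the interface between the two ingredients, caused by the fact that the universe $V$ is a proper class. One cannot form a Skolem hull directly inside $V$ without appealing to global choice, which is unavailable in plain \textsf{ZFC}; this is exactly why the reflection step, delivering the honest set $V_\alpha$ on which ordinary choice suffices to define the $F_\psi$, is indispensable. Once this is in place, the remaining verifications --- that the Tarski--Vaught closure condition holds for $M$ and that the cardinality bookkeeping goes through --- are routine.
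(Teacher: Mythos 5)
Your argument is correct and is precisely the classical reflection-plus-L\"owenheim--Skolem proof to which the paper itself defers (Kunen, Chapter~IV, Theorem~7.8) rather than reproving: close under subformulas, reflect to a $V_\alpha\supseteq X$, and take a Skolem hull of $X$ inside $V_\alpha$ via Tarski--Vaught. The only cosmetic point is that to get the stated equality $|M|=\max(\aleph_0,|X|)$ when $X$ is finite you should seed the hull with $X\cup\omega$ (or otherwise pad $M$ to be infinite), since the closure of a finite set under finitely many Skolem functions could a priori be finite.
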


The following notation is useful.

\begin{definition}
\rm Let $\Phi$ be a finite list of formulas and $X$ be any countable set.
Let $M \supset X$ be a {\tt countable} set such that each $\phi$ from $\Phi$ is absolute for $M$.
Then we say that $M$ \emph{is a suitable model for $\Phi$ containing $X$}.
This is denoted by $M \prec (\Phi; X)$.
\end{definition}

\noindent Let us emphasize that a suitable model in our terminology is always countable.

We shall also use the following convention.

\begin{convention}\hfil\\
$\bullet$ If $(X, +, \cdot, \| \cdot \|)$ is a normed linear space and $M$ is a suitable model (for some finite list of formulas containing some set), then by writing $X\in M$ (or by writing $\{X\}\subset M$) we mean that 
$X,\;+,\;\cdot,\;\text{\mbox{$\| \cdot \|$}} \in M$.\\
$\bullet$ If $X$ is a topological space and $M$ is a suitable model, then we denote by $X_M$ the set $\overline{X\cap M}$; clearly, the set $X_M$ is separable.
\end{convention}

Finally, we recall several results about suitable models
(the proofs are easy and they can be found in \cite[Sections 2 and 3]{c}).

\begin{lemma}\label{l:basics-in-M}
There are a finite list of formulas $\Phi$ and a countable set $C$ such that any $M \prec (\Phi; C)$ satisfies the following:
\begin{enumerate}[\upshape (i)]
	\item Let $f$ be a function such that $f\in M$. Then, for every $x \in \operatorname{Dom} f \cap M$ we have $f(x) \in M$.
	\item If $X$ is a normed linear space and $X\in M$, then $X_M := \overline{X\cap M}$ is a (closed separable) linear subspace.
\end{enumerate}
\end{lemma}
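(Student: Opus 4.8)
The plan is to exhibit an explicit finite list $\Phi$ and a countable set $C$, and then to read off both (i) and (ii) purely from the absoluteness of the formulas in $\Phi$ for any $M\prec(\Phi;C)$. For $C$ I would take any countable set containing $\qe$. Into $\Phi$ I would put, first, a formula expressing the pairing axiom $\forall u\,\forall v\,\exists z\,(z=\{u,v\})$ together with the bounded formulas defining the ordered pair $(u,v)=\{\{u\},\{u,v\}\}$, the relations ``$p\in f$'', ``$f$ is a function'', and ``$x\in\operatorname{Dom}f$''; these guarantee that for $u,v\in M$ one has $\{u,v\}\in M$ and $(u,v)\in M$, and that all these notions mean the same thing computed inside $M$ as outside. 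Second, I would include the (true) statement
\[
\forall f\,\forall x\,\exists y\,\bigl[\,(\text{$f$ is a function}\wedge x\in\operatorname{Dom}f)\to (x,y)\in f\,\bigr],
\]
which is a theorem of the set theory because the consequent holds for $y=f(x)$ when the antecedent holds and is vacuously satisfied otherwise.

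For (i) I would fix a function $f\in M$ and $x\in\operatorname{Dom}f\cap M$. Since the defining formulas of ``$f$ is a function'' and ``$x\in\operatorname{Dom}f$'' are absolute for $M$, these facts transfer to the relativized sense. Applying the relativization of the displayed formula to the elements $f,x\in M$ then produces some $y\in M$ with $(x,y)\in f$; because $f$ is a function this forces $y=f(x)$, whence $f(x)\in M$, which is exactly (i).

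For (ii) I would use the Convention: $X\in M$ means $+,\cdot,\|\cdot\|\in M$, where $+\colon X\times X\to X$ and $\cdot\colon\er\times X\to X$ are functions lying in $M$. Given $x,y\in X\cap M$, the pairing formulas give $(x,y)\in M$, and then (i) applied to $+$ yields $x+y=+(x,y)\in M$; as also $x+y\in X$, we obtain $x+y\in X\cap M$. Likewise, for $q\in\qe\subset C\subset M$ and $x\in X\cap M$ we have $(q,x)\in M$, hence $qx=\cdot(q,x)\in X\cap M$. To see $X\cap M\neq\emptyset$ I would add to $\Phi$ a formula extracting the additive neutral element of a normed space, so that $X\in M$ yields $0_X\in M$ and thus $0_X\in X\cap M$. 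Consequently $X\cap M$ is closed under addition, rational scalar multiplication, and contains $0_X$, i.e. $\qespan(X\cap M)=X\cap M$. Since $\qe$ is dense in $\er$, the norm-closure of a $\qe$-linear subspace is an $\er$-linear subspace: for $u,v\in X_M$ and $\lambda\in\er$ one approximates $u,v$ by vectors of $X\cap M$ and $\lambda$ by rationals to conclude $\lambda u+v\in X_M$. Hence $X_M=\overline{X\cap M}$ is a closed linear subspace, and it is separable because the countable set $X\cap M$ is dense in it.

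The main obstacle is not the algebra in (ii) but the bookkeeping of absoluteness underlying (i): since Theorem~\ref{T:countable-model} does not make $M$ transitive, I must check that each auxiliary notion — ordered pair, ``being a function'', domain, and the membership relations among them — is expressed by a formula whose satisfaction is unchanged under relativization to $M$. The safe route is to place, for each such notion, its bounded defining formula into the finite list $\Phi$, and to verify that for the relevant small witnesses (the sets $\{u\}$, $\{u,v\}$, $(u,v)$, all of whose elements already lie in $M$ once $u,v\in M$) relativized and unrelativized satisfaction coincide. Once this is arranged, both (i) and (ii) follow mechanically, which is presumably why the two consequences are bundled into a single lemma.
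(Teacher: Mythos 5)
Your argument is correct and is essentially the standard proof: the paper itself gives no proof of this lemma but refers to \cite[Sections 2 and 3]{c}, where precisely this reasoning appears --- extracting $f(x)\in M$ from the absoluteness of $\exists y\,\bigl((x,y)\in f\bigr)$ together with its subformulas, and then applying it to $+$ and $\cdot$ with $\qe\subset M$ to see that $X\cap M$ is $\qe$-linear, so its closure is a closed separable linear subspace. Your attention to the pairing/absoluteness bookkeeping is exactly the point the reference handles in the same way, so there is nothing to add.
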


Now, let us come to our observation from which all the results in this section follow. We need one more notion which comes from \cite{cuthKalenda}.

\begin{definition}\label{d:richNice}\rm Let $X$ be a topological space. We say that \emph{suitable models generate nice rich families in $X$}, if the following holds:\\[3pt]
Whenever $Y$ is a countable set and $\Phi$ is a finite list of formulas, there exists a family $\M$ of sets satisfying the following conditions:
\begin{enumerate}[\upshape (i)]
	\item $\forall M\in\M:\quad M\prec(\Phi;Y)$.
	\item The set $\{X_M:\; M\in\M\}$ is a rich family of separable subspaces in $X$.
	\item $\forall M,N\in\M:\quad M\subset N \Longleftrightarrow X_M \subset X_N$.
\end{enumerate}
\end{definition}

\begin{thm}\label{t:gener}Let $X$ be a topological space which is homeomorphic to a Banach space. Then suitable models generate nice rich families in $X$.\end{thm}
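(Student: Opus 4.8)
The plan is to prove Theorem~\ref{t:gener} by constructing, for a given countable set $Y$ and finite list of formulas $\Phi$, a family $\M$ of suitable models whose associated subspaces $X_M$ form a rich family and such that the inclusion of models exactly mirrors the inclusion of subspaces. I would first enlarge the given list $\Phi$ to a list $\Phi'$ that also contains the formulas from Lemma~\ref{l:basics-in-M}, so that every model we consider has the basic closure properties: it is closed under the application of functions it contains, and $X_M$ is automatically a closed separable linear subspace. Since $X$ is homeomorphic to a Banach space, I would fix such a homeomorphism and a compatible norm structure and throw the relevant objects (the Banach space, its operations and norm, the homeomorphism and its inverse) into the countable seed set $Y$, ensuring via our convention that they lie in every model. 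This reduces the whole problem to the Banach space case, where the linear structure gives us the tools to control countable unions.

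Next I would define $\M$ to be the collection of all suitable models $M \prec (\Phi'; Y)$ that are \emph{countably closed in a controlled sense}, or more concretely I would build $\M$ as the set of models generated from the seed together with countable subsets of $X$ in a canonical way. The goal of conditions (ii) and (iii) is best achieved if one produces, for each $C \in [X]^{\leq\omega}$, a canonical smallest model $M_C \supset Y \cup C$ with $M_C \prec (\Phi'; Y)$, depending monotonically on $C$. The standard technique is a Skolem-type construction: one takes Skolem functions for the formulas in $\Phi'$ (witnessing the existential quantifiers), adds them to $Y$, and closes $Y \cup C$ under these countably many functions. Because the closure under a fixed countable family of finitary functions is a monotone operation on countable sets that commutes with increasing countable unions, the resulting assignment $C \mapsto M_C$ is itself $\omega$-monotone. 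I would then set $\M = \{M_C : C \in [X]^{\leq\omega}\}$.

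With this construction, verifying the three conditions proceeds as follows. Condition (i) is immediate since each $M_C$ is a suitable model for $\Phi'$ (hence for $\Phi$) containing $Y$ by the Skolem closure argument. For condition (ii), cofinality holds because any separable subspace is the closure of a countable set $C$, and $X_{M_C} \supset \overline{C}$ contains it; $\sigma$-closedness follows from the $\omega$-monotonicity of $C \mapsto M_C$, since for an increasing sequence $M_{C_n}$ one has $\overline{\bigcup_n X_{M_{C_n}}} = X_{M_D}$ where $D = \bigcup_n C_n$, using that closure under the Skolem functions commutes with increasing unions and that $X\cap M_D \supset \bigcup_n (X\cap M_{C_n})$ is dense in both sides. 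Condition (iii) is the delicate one: the direction $M_C \subset M_D \Rightarrow X_{M_C}\subset X_{M_D}$ is trivial, but the converse requires that the models be reconstructible from their traces on $X$. Here one uses that in the Banach space setting every element of $M_C$ is definable from $Y$ and from elements of $X \cap M_C$ via the Skolem functions, so that $M_C$ is determined by $X\cap M_C$, and $X\cap M_C$ is in turn a countable dense subset of $X_{M_C}$ recoverable from $X_{M_C}$ together with the fixed data in $Y$.

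I expect the main obstacle to be precisely condition (iii), namely ensuring that the map $M \mapsto X_M$ is injective enough on $\M$ to reflect inclusions backwards. The key technical point is to guarantee that every element of the model is \emph{coded} by countably much information living in $X$ (for instance, that rational linear combinations of a countable dense subset, together with the countable seed $Y$, suffice to generate the model under the Skolem functions), so that two models with the same intersection with $X$ must coincide and, more generally, inclusion of the subspaces forces inclusion of the models. This is exactly the place where homeomorphism to a Banach space is used: the linear (and metric) structure lets us replace arbitrary countable subsets of $M$ by countable subsets of $X$ via $\qespan$ and density, which is not available for a general topological space. I would carry out this coding carefully, after which conditions (i) and (ii) become routine consequences of the $\omega$-monotonicity of the Skolem closure.
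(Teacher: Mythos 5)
There is a genuine gap, and it sits exactly where you suspected: condition (iii). Your plan is to take $\M=\{M_C\setsep C\in[X]^{\leq\omega}\}$, where $M_C$ is the Skolem closure of $Y\cup C$, and to argue that $M_C$ is determined by $X\cap M_C$. That last claim is true (the Skolem hull of $Y\cup(X\cap M_C)$ is again $M_C$), but it does not give you what condition (iii) asks for. The hypothesis there is $X_M\subset X_N$, i.e.\ $\overline{X\cap M}\subset\overline{X\cap N}$, and passing to closures destroys precisely the information you need: two models can have incomparable traces on $X$ whose closures are nested or even equal (take two countable dense subsets of the same separable subspace, neither contained in the other's Skolem hull). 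Nothing in "rational linear combinations of a countable dense subset generate the model'' lets you recover $X\cap M\subset X\cap N$ from $\overline{X\cap M}\subset\overline{X\cap N}$. Note also that your verification of $\sigma$-closedness silently uses the missing implication: an increasing sequence in $\{X_{M_C}\}$ is an increasing sequence of \emph{subspaces}, not of index sets $C$ or of models, so without (iii) you cannot replace it by an increasing sequence of models.

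The known way to close this gap is to index the models not by arbitrary $C\in[X]^{\leq\omega}$ but by countable subsets of a fixed linearly dense set $I$ satisfying $i\notin\closedSpan(I\setminus\{i\})$ for all $i\in I$, which yields $\closedSpan A\subset\closedSpan B\implies A\subset B$ for $A,B\in[I]^{\leq\omega}$ and thus lets one invert the closure operation (compare the proof of Theorem~\ref{t:mon->rich} in this paper). This is also why your reduction stops one step too early: you reduce to "the Banach space case,'' but a general Banach space need not admit such a fundamental minimal (equivalently, fundamental biorthogonal) system --- consistently there are nonseparable Banach spaces without uncountable biorthogonal systems --- so the argument cannot be run there in ZFC. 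The paper instead invokes Toru\'nczyk's theorem to transfer everything along a homeomorphism onto a \emph{Hilbert} space of the same density, where an orthonormal basis plays the role of $I$ and \cite[Theorem 2.7]{cuthKalenda} already provides the family $\M$; the only work remaining is the observation that $f(X\cap M)=H\cap M$ (both $f$ and $f^{-1}$ lie in $M$, so Lemma~\ref{l:basics-in-M}(i) applies in both directions), whence $f(X_M)=H_M$ and conditions (i)--(iii) transfer verbatim. Your outline would become correct if you carried the reduction through to a Hilbert space and built the coding of models on an orthonormal basis rather than on an arbitrary countable dense set.
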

\begin{proof}We may without loss of generality assume $X$ is not separable. By the result of H. Toru\'nczyk \cite{To81}, all the infinite-dimensional Banach spaces with the same density are topologically homeomorphic. Hence, if $H$ is a Hilbert space of the same density as $X$, we may pick $f:X\to H$ a homeomorphism onto (not necessarily linear). In order to see that suitable models generate nice rich families in $X$, fix a countable set $Y$ and a finite list of formulas $\Phi$. We may without loss of generality assume that  $X$, $f$, $f^{-1}$, $H$ and all the sets from the statement of Lemma~\ref{l:basics-in-M} are elements of $Y$ and that our $\Phi$ contains all the formulas from the statement of Lemma~\ref{l:basics-in-M} (because the condition (i) in Definition~\ref{d:richNice} is inherited by countable subsets and shorter sublists of formulas).

By \cite[Theorem 2.7]{cuthKalenda}, suitable models generate nice rich families in $H$; hence, we may pick a family $\M$ satisfying (i)--(iii) in Definition~\ref{d:richNice} for the space $H$. We will show that this family is sufficient for the space $X$ as well. First, notice that $f(X_M) = H_M (=\overline{H\cap M})$. Indeed, by Lemma~\ref{l:basics-in-M}, we have $f(X\cap M)\subset H\cap M$ and $f^{-1}(H\cap M)\subset X\cap M$; hence, $f(X\cap M) = H\cap M$ and since $f$ is homeomorphism, $f(X_M) = H_M$. Thus, whenever $M$, $N\in \M$, we have
\[
M\subset N \Longleftrightarrow \overline{H\cap M}\subset \overline{H\cap N}\Longleftrightarrow \overline{X\cap M}\subset \overline{X\cap N}.
\]
It remains to show that $\{X_M:\; M\in\M\}$ is a rich family of separable subspaces in $X$. This easily follows from the above and the fact that, by the choice of $\M$, $\{H_M:\; M\in\M\}$ is a rich family of separable subspaces in $H$.
\end{proof}

\begin{proof}[Proof of Theorem \ref{t:main1}]Statement (i) follows from \cite[Proposition 3.1]{cuthKalenda}, statement (ii) from Theorem \ref{t:gener}. The ``moreover'' part follows from the fact that the proof of \cite[Proposition 3.1]{cuthKalenda} works also for topological spaces (not only for Banach spaces for which it is formulated).
\end{proof}

\begin{remark}Let us emphasize that statements from this Section are metamathematical rather than mathematical (because  existence of certain formulas is required in their statements). Therefore, one should realize we are using methods of mathematical logic in order to show the existence of certain formulas in our proofs.
\end{remark}

\section{Comparision of the concept of rich families with the concept of $\omega$-monotone mappings}\label{section:richVsMonotone}

In this section we give some basic observations concerning the concept of $\omega$-monotone mappings and we compare it with the concept of rich families. Our main result is that in Banach spaces both concepts are equivalent, see Theorems \ref{t:rich->mon} and \ref{t:mon->rich}. We start with an observation which enables us to combine finitely many results together.

\begin{proposition}\label{p:combineMon}Let $X$ be a topological space and let $\F_1,\ldots,\F_n\subset \S(X)$ be a finite number of families large in the sense of $\omega$-monotone mappings. Then there exists $\F\subset\bigcap_{i=1}^n\F_i$ which is large in the sense of $\omega$-monotone mappings.
\end{proposition}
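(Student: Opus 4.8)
The plan is to reduce the finite case to combining two $\omega$-monotone mappings and then iterate, since associativity of the construction makes the passage from two to $n$ routine. So the heart of the matter is: given families $\F_1,\dots,\F_n$ with witnessing maps $\phi_1,\dots,\phi_n$ (each $\omega$-monotone with $\phi_i(C)\supset C$), produce a single $\omega$-monotone map $\psi$ with $\psi(C)\supset C$ whose family $\{\overline{\psi(C)}\setsep C\in[X]^{\leq\omega}\}$ is contained in $\bigcap_{i=1}^n\F_i$.

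First I would define $\psi$ by a ``simultaneous closure'' procedure. Set $\psi_0(C)=C$ and, for $k\geq 0$,
\[
\psi_{k+1}(C)=\psi_k(C)\cup\bigcup_{i=1}^n\phi_i\bigl(\psi_k(C)\bigr),
\qquad
\psi(C)=\bigcup_{k=0}^\infty\psi_k(C).
\]
Because each $\phi_i$ maps countable sets to countable sets and a countable union of countable sets is countable, an easy induction shows each $\psi_k(C)$ is countable, hence $\psi(C)\in[X]^{\leq\omega}$; and clearly $\psi(C)\supset\psi_0(C)=C$. The monotonicity of $\psi$ (condition (i) of Definition~\ref{def:omega}) follows by induction on $k$ from the monotonicity of each $\phi_i$: if $A\subset B$ then $\psi_k(A)\subset\psi_k(B)$ for every $k$, so $\psi(A)\subset\psi(B)$. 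For the $\omega$-continuity condition (ii), given an increasing sequence $(A_m)$ with union $A$, I would check $\psi_k(A)=\bigcup_m\psi_k(A_m)$ by induction on $k$, using condition (ii) for each $\phi_i$ together with the fact that the $\psi_k(A_m)$ form an increasing sequence in $m$; passing to the union over $k$ and interchanging the two countable unions then yields $\psi(A)=\bigcup_m\psi(A_m)$.

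The key point is that $\psi(C)$ is a fixed point for every $\phi_i$ in the sense that $\phi_i(\psi(C))\subset\psi(C)$. This is exactly why the construction runs to a countable limit rather than stopping after one step. Indeed, if $x\in\phi_i(\psi(C))=\phi_i(\bigcup_k\psi_k(C))=\bigcup_k\phi_i(\psi_k(C))$ — here using the $\omega$-continuity of $\phi_i$ on the increasing sequence $(\psi_k(C))_k$ — then $x\in\phi_i(\psi_k(C))\subset\psi_{k+1}(C)\subset\psi(C)$ for some $k$. Consequently $\overline{\psi(C)}$ is simultaneously of the form $\overline{\phi_i(D)}$: taking $D=\psi(C)$ we have $D\subset\phi_i(D)\subset\psi(C)=D$, so $\phi_i(D)=D$ and $\overline{\psi(C)}=\overline{\phi_i(D)}\in\F_i$ for each $i$. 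Hence $\overline{\psi(C)}\in\bigcap_{i=1}^n\F_i$, which is what is needed to conclude that $\F=\{\overline{\psi(C)}\setsep C\in[X]^{\leq\omega}\}$ is large in the sense of $\omega$-monotone mappings and contained in $\bigcap_{i=1}^n\F_i$.

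The main obstacle I anticipate is the verification of condition (ii) for $\psi$, specifically the interchange of the two nested countable unions (over the iteration index $k$ and over the sequence index $m$). One has to be careful that $\psi_k(A)=\bigcup_m\psi_k(A_m)$ really does propagate through the inductive step, where the right-hand side mixes the previous stage with the images under the $\phi_i$; the cleanest route is to observe that for fixed $k$ the sets $\psi_k(A_m)$ increase with $m$, so the $\omega$-continuity of each $\phi_i$ applies directly to this increasing sequence. Everything else — countability at each stage, monotonicity, and the fixed-point argument giving membership in the intersection — is routine once the recursive definition is in place.
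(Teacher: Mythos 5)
Your proof is correct and follows essentially the same strategy as the paper's: iterate the witnessing maps countably many times to reach a countable set that is a fixed point of every $\phi_i$, so that $\overline{\psi(C)}=\overline{\phi_i(\psi(C))}$ lies in each $\F_i$. The only cosmetic difference is that the paper treats two families at a time, setting $\phi(C)=\bigcup_{k\in\en}(\phi_2\phi_1)^k(C)$ and then inducting on $n$, whereas you close under all $n$ maps simultaneously at each stage.
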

\begin{proof}It suffices to give the proof for the intersection of two families and then use induction. Let $\F_1$, $\F_2$ be two families large in the sense of $\omega$-monotone mappings and let $\phi_1$, $\phi_2$ be the corresponding witnessing maps. Let us define an $\omega$-monotone mapping $\phi:[X]^{\leq\omega}\to [X]^{\leq\omega}$ by $\phi(C):=\bigcup_{n\in\en}(\phi_2\phi_1)^n(C)$, $C\in[X]^{\leq\omega}$. Then
\[
\phi(C)\subset \phi_1(\phi(C)) = \bigcup_{n\in\en}\phi_1(\phi_2\phi_1)^n(C)\subset \bigcup_{n\in\en}(\phi_2\phi_1)^{n+1}(C) = \phi(C).
\]
Thus, we have $\phi(C) = \phi_1(\phi(C))$. Since it follows that
\[
\phi_1(\phi(C))\subset \phi_2(\phi_1(\phi(C))) = \bigcup_{n\in\en}(\phi_2\phi_1)^{n+1}(C) = \phi(C) = \phi_1(\phi(C)),
\]
we have $\phi(C) = \phi_1(\phi(C)) = \phi_2(\phi_1(\phi(C)))$. Therefore, it suffices to put $\F:=\{\overline{\phi(C)}\setsep C\in[X]^{\leq\omega}\}$.
\end{proof}

\begin{fact}\label{fact}Let $X$ be a topological vector space. Then the family of all the closed separable linear subspaces of $X$ is large in the sense of $\omega$-monotone mappings.
\end{fact}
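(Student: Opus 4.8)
The goal is to produce an $\omega$-monotone mapping $\phi:[X]^{\leq\omega}\to[X]^{\leq\omega}$ with $\phi(C)\supset C$ whose closed images $\overline{\phi(C)}$ range exactly over the closed separable linear subspaces of $X$. The natural candidate is the \emph{rational linear span}: I would set $\phi(C):=\qespan C$, the set of all finite rational linear combinations of elements of $C$. Since $C$ is countable, so is $\qespan C$, so $\phi$ does map $[X]^{\leq\omega}$ into itself; and trivially $C\subset\qespan C=\phi(C)$. The key point is that $\overline{\qespan C}$ equals the closed linear span $\closedSpan C$, because scalar multiplication and addition are continuous in a topological vector space, so the closure of the rational span already absorbs all real scalar multiples and is therefore a genuine closed linear subspace. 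Conversely every closed separable linear subspace $V$ has a countable dense subset $D$, and then $V=\overline{\qespan D}=\overline{\phi(D)}$, so the family $\{\overline{\phi(C)}\setsep C\in[X]^{\leq\omega}\}$ is precisely the family of closed separable linear subspaces.

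It remains to verify that $\phi=\qespan$ is $\omega$-monotone in the sense of Definition~\ref{def:omega}. Monotonicity (i) is immediate: if $A\subset B$, every rational combination of elements of $A$ is a rational combination of elements of $B$, so $\qespan A\subset\qespan B$. For the continuity condition (ii), let $(A_n)$ be increasing with union $A=\bigcup_n A_n$. The inclusion $\qespan A\supset\bigcup_n\qespan A_n$ follows from monotonicity. For the reverse inclusion, any $x\in\qespan A$ is a \emph{finite} rational combination $x=\sum_{j=1}^k q_j a_j$ with $a_j\in A$; since there are finitely many $a_j$ and the $A_n$ are increasing, all of them lie in a single $A_{n_0}$, whence $x\in\qespan A_{n_0}\subset\bigcup_n\qespan A_n$. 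Thus $\qespan(\bigcup_n A_n)=\bigcup_n\qespan A_n$, which is exactly (ii).

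I do not expect a serious obstacle here; the statement is essentially the observation that rational span is a finitary, countable-valued closure-like operation, and finitariness is precisely what delivers the $\omega$-continuity in (ii). The only point requiring a word of care is the passage from $\qespan$ to its closure: one must note that $\phi$ itself need not have closed values (that is why Definition of largeness in the sense of $\omega$-monotone mappings takes closures $\overline{\phi(C)}$ rather than $\phi(C)$), and that $\overline{\qespan C}$ is linear — this uses continuity of the vector space operations in a topological vector space, which is exactly the hypothesis. Therefore $\qespan$ witnesses that the family of all closed separable linear subspaces is large in the sense of $\omega$-monotone mappings.
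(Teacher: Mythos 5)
Your proof is correct and uses exactly the same witnessing map as the paper, namely $C\mapsto\qespan C$; the paper states this in a single line and leaves the verification to the reader, while you have simply written out the routine details.
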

\begin{proof}The witnessing map is given by $[X]^{\leq\omega}\ni C\mapsto \qespan C\in[X]^{\leq\omega}$.
\end{proof}

\begin{thm}\label{t:rich->mon}Let $X$ be a topological space and let $\F\subset \S(X)$ be a rich family. Then there exists $\F'\subset \F$ which is large in the sense of $\omega$-monotone mappings.
\end{thm}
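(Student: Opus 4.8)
The plan is to manufacture a witnessing $\omega$-monotone map $\phi$ directly from the richness of $\F$, reducing everything to a single recursive construction on the \emph{finite} subsets of $X$. The guiding principle is that monotonicity and $\omega$-continuity come essentially for free once $\phi$ is defined as a union over finite subsets, whereas membership of the closures in $\F$ must be extracted from $\sigma$-closedness, hence from an \emph{increasing} sequence of elements of $\F$.

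First I would build, by recursion on the cardinality of $S$, a map $e$ assigning to each finite $S\subseteq X$ a countable set $e(S)\supseteq S$ with $\overline{e(S)}\in\F$ and, crucially, $e(S')\subseteq e(S)$ whenever $S'\subseteq S$. Concretely, having defined $e$ on all proper subsets of a finite $S$, put $B_S:=S\cup\bigcup\{e(S')\setsep S'\subsetneq S\}$, which is countable; using cofinality of $\F$ choose $F_S\in\F$ with $\overline{B_S}\subseteq F_S$ and let $e(S)$ be a countable dense subset of $F_S$ containing $B_S$. Then $\overline{e(S)}=F_S\in\F$, and monotonicity on finite sets holds by construction, since each $e(S')$ with $S'\subsetneq S$ is swallowed by $B_S\subseteq e(S)$.

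Next I would define $\phi\colon[X]^{\leq\omega}\to[X]^{\leq\omega}$ by $\phi(C):=\bigcup\{e(S)\setsep S\subseteq C\text{ finite}\}$. This $\phi$ is countable-valued, satisfies $\phi(C)\supseteq C$, is monotone, and is $\omega$-continuous: if $A_n\uparrow A$ then every finite subset of $A$ already lies in some $A_n$, so the index set of the defining union splits as the increasing union of the corresponding index sets for the $A_n$. It remains to check $\overline{\phi(C)}\in\F$. Writing $C=\{c_1,c_2,\dots\}$ and $T_n:=\{c_1,\dots,c_n\}$, monotonicity of $e$ makes $(e(T_n))_n$ an increasing sequence with $\overline{e(T_n)}\in\F$; since every finite $S\subseteq C$ is contained in some $T_n$, we get $\phi(C)=\bigcup_n e(T_n)$, whence $\overline{\phi(C)}=\overline{\bigcup_n e(T_n)}\in\F$ by $\sigma$-closedness. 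Setting $\F':=\{\overline{\phi(C)}\setsep C\in[X]^{\leq\omega}\}$ then yields $\F'\subseteq\F$ large in the sense of $\omega$-monotone mappings, with witnessing map $\phi$.

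The main obstacle is exactly the passage to $\F$-membership: since $\F$ is only assumed closed under \emph{increasing} countable unions, and need not be closed under finite unions, one cannot simply take the union of the sets $e(S)$ over all finite $S\subseteq C$ and hope its closure lies in $\F$. The role of the finite-set monotonicity built into $e$ is precisely to reorganize this union as the increasing chain $(e(T_n))_n$, so that $\sigma$-closedness applies. Keeping this reorganization available for \emph{every} countable $C$ simultaneously, in a way compatible with monotonicity and $\omega$-continuity of $\phi$, is the only delicate point; the recursion on finite sets is what makes it possible.
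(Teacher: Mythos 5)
Your proof is correct. It rests on the same two pillars as the paper's argument---cofinality of $\F$ to produce members of $\F$ containing prescribed countable sets, and $\sigma$-closedness applied to an increasing chain to conclude $\overline{\phi(C)}\in\F$---but you manufacture the chain differently. The paper fixes, for each $x\in X$, some $F_x\in\F$ with $x\in F_x$ and, for each pair $G,H\in\F$, an upper bound $F_{G,H}\in\F$; it then closes $\{F_x\setsep x\in C\}$ under the pairing operation to obtain a countable \emph{up-directed} subfamily $\G(C)\subset\F$, rewrites that up-directed family as an increasing sequence, and sets $\phi(C)=C\cup\bigcup\{D_F\setsep F\in\G(C)\}$ for chosen countable dense sets $D_F$. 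You instead install the coherence at the level of finite subsets of $X$: the recursion on $|S|$ forces $e(S')\subset e(S)$ whenever $S'\subset S$ (here it matters that a finite set has only finitely many proper subsets, so that $B_S$ stays countable), and then the chain $e(T_1)\subset e(T_2)\subset\cdots$ along an exhaustion of $C$ is increasing by construction, with $\overline{e(T_n)}\in\F$ increasing as well, so $\overline{\phi(C)}=\overline{\bigcup_n e(T_n)}=\overline{\bigcup_n\overline{e(T_n)}}\in\F$. What your version buys is a single witnessing map given by one union over finite subsets, from which monotonicity and condition (ii) of Definition~\ref{def:omega} are immediate from the shape of the definition; the price is a global recursion over all finite subsets of $X$, carried out cardinality level by cardinality level, which consumes the same amount of choice as the paper's pointwise and pairwise selections. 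Both routes are equally general (they work for an arbitrary topological space, as the theorem requires), and I see no gap in yours.
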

\begin{proof}For every $x\in X$, let us pick one $F_x\in\F$ with $x\in F_x$ and for every $F\in\F$, let us pick a countable set $D_F\subset F$ which is dense in $F$. Moreover, for every $G,H\in\F$, pick $F_{G,H}\in\F$ with $F_{G,H}\supset G\cup H$. Now, let us define a function $\G:[X]^{\leq\omega}\to [\F]^{\leq\omega}$ by putting for every $C\in[X]^{\leq\omega}$
\[\begin{split}
	\G_1(C) & : = \{F_x\setsep x\in C\},\\
	\G_{n+1}(C) & := \G_n(C)\cup \{F_{G,H}\setsep G,H\in\G_n(C)\},\qquad n\in\en,\\
	\G(C) & : = \bigcup_{n\in\en}\G_n(C).
\end{split}\]
It follows from the construction that $\G(C)$ is up-directed. Moreover, $\G$ is an $\omega$-monotone mapping. Indeed, it is easy to see that $\G$ is monotone. In order to prove $\omega$-monotonicity, pick an increasing sequence $(C_k)_{k\in\en}$ of countable subsets of $X$. By monotonicity, we have $\G(\bigcup_{k=1}^\infty C_k)\supset \bigcup_{k=1}^\infty \G(C_k)$. By induction on $n\in\en$, it is straightforward to verify that $\G_n(\bigcup_{k\in\en} C_k)\subset \bigcup_{k=1}^\infty \G(C_k)$; hence, we have $\G(\bigcup_{k=1}^\infty C_k)\subset \bigcup_{k=1}^\infty \G(C_k)$. This proves that $\G$ is an $\omega$-monotone mapping.

Finally, define a function $\phi:[X]^{\leq\omega}\to [X]^{\leq\omega}$ by $\phi(C):=C\cup \bigcup\{D_F\setsep F\in\G(C)\}$, $C\in[X]^{\leq\omega}$. Since $\G$ is $\omega$-monotone, $\phi$ is also $\omega$-monotone. Fix $C\in[X]^{\leq\omega}$. By the definition of $\phi$, we have $\phi(C)\supset C$ and $\overline{\phi(C)} = \overline{\bigcup\{F\setsep F\in\G(C)\}}$. Since $\G(C)$ is a countable and up-directed set, there is an increasing sequence $(G_n)_{n\in\en}$ consisting of elements from $\G(C)$ with $\overline{\bigcup\{F\setsep F\in\G(C)\}} = \overline{\bigcup G_n}$.  Hence, $\overline{\phi(C)} = \overline{\bigcup G_n}$ and since $\F$ is $\sigma$-closed, we have $\overline{\phi(C)}\in\F$. Now, it is enough to put $\F':=\{\overline{\phi(C)}\setsep C\in[X]^{\leq\omega}\}$.
\end{proof}

Now, we will proof a converse to Theorem \ref{t:rich->mon} for topological spaces homeomorphic to a Banach space. The key observation is the following.

\begin{lemma}\label{l:podzobrazeni}Let $X$ be a topological space, $\phi:[X]^{\leq\omega}\to [X]^{\leq\omega}$ an $\omega$-monotone mapping such that $\phi(C)\supset C$ for every $C\in[X]^{\leq\omega}$ and let $D\in [X]^{\leq\omega}$ be such that $\phi(\{x_1,\ldots,x_n\})\subset \overline{D}$ for every $x_1,\ldots,x_n\in D$. Then $\overline{D} = \overline{\phi(D)}$.
\end{lemma}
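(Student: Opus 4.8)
The plan is to prove the two inclusions separately. The inclusion $\overline{D} \subset \overline{\phi(D)}$ is immediate: since $\phi(D) \supset D$ by the standing assumption on $\phi$, taking closures gives $\overline{\phi(D)} \supset \overline{D}$. So the entire content lies in the reverse inclusion, for which it clearly suffices to show $\phi(D) \subset \overline{D}$.

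The key idea for $\phi(D)\subset\overline{D}$ is to exhaust the countable set $D$ by its finite initial segments so as to invoke $\omega$-monotonicity. Concretely, I would enumerate $D = \{y_1, y_2, \ldots\}$ and set $D_n := \{y_1, \ldots, y_n\}$, so that $(D_n)_{n\in\en}$ is an increasing sequence of finite subsets of $X$ with $\bigcup_{n\in\en} D_n = D$. Applying condition (ii) of Definition~\ref{def:omega} to this increasing sequence yields
\[
\phi(D) = \phi\Bigl(\bigcup_{n\in\en} D_n\Bigr) = \bigcup_{n\in\en} \phi(D_n).
\]
Each $D_n$ is precisely a finite tuple of elements of $D$, so the hypothesis $\phi(\{x_1,\ldots,x_n\}) \subset \overline{D}$ gives $\phi(D_n) \subset \overline{D}$ for every $n$. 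Hence $\phi(D) = \bigcup_{n\in\en} \phi(D_n) \subset \overline{D}$, and passing to closures gives $\overline{\phi(D)} \subset \overline{\overline{D}} = \overline{D}$. Combined with the easy inclusion, this proves $\overline{D} = \overline{\phi(D)}$. (If $D$ happens to be finite, the hypothesis applies directly to $D$ itself and the enumeration step is not needed.)

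I expect no genuine obstacle here: the whole argument is the reduction of the countable set to an increasing union of its finite initial segments, which is exactly the setting in which condition (ii) of $\omega$-monotonicity turns one potentially large image $\phi(D)$ into a countable union of small images $\phi(D_n)$ that the hypothesis already controls. Note that monotonicity (condition (i)) is not even required for this lemma; only the commutation of $\phi$ with increasing unions is used.
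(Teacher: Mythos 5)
Your proof is correct and uses essentially the same idea as the paper: enumerate the countable set $D$, exhaust it by finite initial segments, and apply condition (ii) of $\omega$-monotonicity to write $\phi(D)$ as the union of the images of those segments, each of which the hypothesis places inside $\overline{D}$. Your write-up is in fact slightly cleaner, since you get the easy inclusion directly from $\phi(D)\supset D$ and thereby avoid the paper's intermediate identity $\overline{D}=\overline{\bigcup\{\phi(\{x_1,\ldots,x_n\})\setsep x_1,\ldots,x_n\in D\}}$ (and with it the use of monotonicity), but this is a reorganization rather than a different argument.
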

\begin{proof}First, let us observe that we have
\[
\overline{D} = \overline{\bigcup\{\phi(\{x_1,\ldots,x_n\})\setsep x_1,\ldots,x_n\in D\}}.
\]
Indeed, the inclusion ``$\supset$'' follows from the assumptions and the inclusion ``$\subset$'' follows from the fact that $x\in\phi(\{x\})$ for every $x\in D$.
Let us denote the points of $D$ by $\{d_n\}_{n=1}^\infty$. By the $\omega$-monotonicity of $\phi$ we have $\bigcup_{n\in\en}\phi(\{d_1,\ldots,d_n\}) = \phi(D)$. Moreover, by the monotonicity of $\phi$ we have $\bigcup\{\phi(\{x_1,\ldots,x_n\})\setsep x_1,\ldots,x_n\in D\} = \bigcup_{n\in\en}\phi(\{d_1,\ldots,d_n\})$; hence, $\overline{\phi(D)} = \overline{\bigcup\{\phi(\{x_1,\ldots,x_n\})\setsep x_1,\ldots,x_n\in D\}}$ which finishes the proof.
\end{proof}

It follows from the Lemma above that having an $\omega$-monotone mapping $\phi$ as above, we have that $X_M = \overline{\phi(X\cap M)}$ for every suitable model; hence, there is a large family $\F'$ in the sense of suitable models with $\F'\subset \{\overline{\phi(C)}\setsep C\in[X]^{\leq\omega}\}$. By Theorem \ref{t:gener}, in topological spaces homeomorphic to a Banach space we get the needed rich family. In order to make the argument more transparent for non-experts in set theory, we present here a proof not involving suitable models.

\begin{thm}\label{t:mon->rich}Let $X$ be a topological space which is homeomorphic to a Banach space. If $\F\subset \S(X)$ is large in the sense of $\omega$-monotone mappings, then there exists a rich family $\F'\subset \S(X)$ with $\F'\subset \F$.
\end{thm}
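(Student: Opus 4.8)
The plan is to construct the witnessing map's counterpart in the rich-family world by producing, for each countable $C\in[X]^{\leq\omega}$, a canonical separable subspace $\overline{\phi(D)}$ coming from a countably generated ``closed'' set $D$, and to check directly that the resulting family is cofinal and $\sigma$-closed. The key structural tool is Lemma~\ref{l:podzobrazeni}: it says that if $D$ is self-contained under $\phi$ in the sense that $\phi(\{x_1,\ldots,x_n\})\subset\overline{D}$ for all finite tuples from $D$, then $\overline{D}=\overline{\phi(D)}$, so that such $D$ yield genuine members of $\F$. Thus the heart of the argument is to show that ``enough'' such self-contained sets $D$ exist, and that they can be chosen to close up under increasing unions.

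First I would fix a witnessing map $\phi$ for $\F$ and, exactly as in the discussion following Lemma~\ref{l:podzobrazeni}, reduce to the Hilbert space model: since $X$ is homeomorphic to a Banach space, Theorem~\ref{t:gener} guarantees that suitable models generate nice rich families in $X$, and the observation that $X_M=\overline{\phi(X\cap M)}$ for every suitable model $M$ (which is precisely what Lemma~\ref{l:podzobrazeni} gives, because $X\cap M$ is self-contained under any $\phi$ that is an element of $M$) shows that the subfamily $\{X_M\setsep M\in\M\}$ is already contained in $\{\overline{\phi(C)}\setsep C\in[X]^{\leq\omega}\}=\F$. Since the excerpt explicitly asks for a proof \emph{not} invoking suitable models, the alternative route I would take is to build the rich family by hand: set $\F':=\{\overline{\phi(D)}\setsep D\in[X]^{\leq\omega},\ \phi(\{x_1,\dots,x_n\})\subset\overline{D}\text{ for all }x_1,\dots,x_n\in D\}$, the family of self-contained closures, which by Lemma~\ref{l:podzobrazeni} sits inside $\F$.

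Next I would verify cofinality. Given an arbitrary separable $S\in\S(X)$ with countable dense subset $C_0$, I would iterate $\phi$: define $C_{k+1}:=C_k\cup\bigcup\{\phi(\{x_1,\dots,x_n\})\setsep x_1,\dots,x_n\in C_k\}$ and put $D:=\bigcup_k C_k$. Each $C_k$ is countable (a countable union of countable sets), so $D\in[X]^{\leq\omega}$, and by construction any finite tuple from $D$ lands in some $C_k$, whence $\phi(\{x_1,\dots,x_n\})\subset C_{k+1}\subset D\subset\overline{D}$; so $D$ is self-contained and $\overline{\phi(D)}=\overline{D}\in\F'$ contains $S$. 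For $\sigma$-closedness, I would take an increasing sequence $\overline{\phi(D_i)}\in\F'$ with self-contained $D_i$, set $D:=\bigcup_i D_i$, and check that $D$ is again self-contained: a finite tuple from $D$ lies in some single $D_i$ (using that the $D_i$ may be taken increasing, after replacing them by the increasing sequence $D_1\subset D_1\cup D_2\subset\cdots$, which only enlarges the closures harmlessly), so $\phi$ applied to it stays inside $\overline{D_i}\subset\overline{D}$. Then $\overline{\phi(D)}=\overline{D}=\overline{\bigcup_i D_i}=\overline{\bigcup_i\overline{\phi(D_i)}}$, giving the limit in $\F'$.

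The main obstacle I anticipate is the bookkeeping around self-containedness under increasing unions: to conclude $\sigma$-closedness I must know that a finite tuple drawn from $\bigcup_i D_i$ already lies in one $D_i$, which requires the $D_i$ to be genuinely nested, and I must reconcile the closure $\overline{\bigcup_i D_i}$ with $\overline{\bigcup_i\overline{\phi(D_i)}}$ so that the $\sigma$-limit of $\F'$-members is recognized as the $\overline{\phi(D)}$ of the union. This is where $\omega$-monotonicity of $\phi$ does the real work, since property (ii) of Definition~\ref{def:omega} is exactly what lets $\phi(\bigcup_i D_i)$ be computed as $\bigcup_i\phi(D_i)$, matching the closures. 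Once these compatibility checks are in place, the two defining properties of a rich family follow directly, and the inclusion $\F'\subset\F$ is immediate from Lemma~\ref{l:podzobrazeni}.
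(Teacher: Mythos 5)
Your reduction $\F'\subset\F$ and your cofinality argument are fine (closing a countable set up under $\phi$ applied to finite tuples does produce a self-contained $D$, and Lemma~\ref{l:podzobrazeni} then gives $\overline{D}=\overline{\phi(D)}\in\F$). The gap is in $\sigma$-closedness. Given an increasing sequence $\overline{\phi(D_i)}=\overline{D_i}$ with each $D_i$ self-contained, you replace $D_i$ by $E_i:=D_1\cup\dots\cup D_i$ and assert that $\phi$ applied to a finite tuple from $E_i$ stays inside $\overline{E_i}$. But self-containedness of each $D_j$ only controls $\phi$ on tuples drawn from a \emph{single} $D_j$; for a mixed tuple, say $\{x_1,x_2\}$ with $x_1\in D_1\setminus D_2$ and $x_2\in D_2$, nothing is known. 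Knowing $\overline{D_1}\subset\overline{D_2}$ only places $x_1$ in $\overline{D_2}$, not in $D_2$, and self-containedness is a property of the countable generating set, not of its closure --- an $\omega$-monotone $\phi$ has no continuity, so $\phi(\{x_1,x_2\})$ may escape $\overline{D_2}=\overline{E_i}$ entirely. The same obstruction blocks the natural repair of choosing the representatives $D_i$ to be nested a priori: enlarging $D_{i-1}\cup D_i$ to a self-contained set may force you outside $\overline{D_i}$. A telltale sign is that your by-hand construction never uses the hypothesis that $X$ is homeomorphic to a Banach space, whereas the statement (and the paper's proof) relies on it essentially.

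The paper's proof is built precisely to circumvent this. It transfers to a Hilbert space and fixes a linearly dense set $I$ with $i\notin\closedSpan(I\setminus\{i\})$ for all $i\in I$, then indexes the family by $A\in[I]^{\leq\omega}$ via a map $\A$ that interleaves $\qespan$, $\phi$, and a coordinate map $\psi$ into $I$. The point of the independence of $I$ is that each member $\overline{\A(A)}$ of the family carries a \emph{canonical} generating set $\A(A)\cap I$ recoverable from the closed subspace itself, with $\A(\A(A)\cap I)=\A(A)$, and the implication $\closedSpan A\subset\closedSpan B\Rightarrow A\subset B$ for $A,B\in[I]^{\leq\omega}$ guarantees that nested subspaces have nested canonical generators; only then can $\omega$-monotonicity of $\A$ be applied to identify the closure of the union as a member of the family. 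Your first-paragraph route via suitable models and Theorem~\ref{t:gener} is valid (and is exactly the alternative the paper sketches before its proof), but the elementary substitute you offer for it does not close.
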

\begin{proof}By \cite{To81}, all the Banach space with the same density are topologically homeomorphic. Hence, $X$ is topologically homeomorphic to a Hilbert space $H$. First, let us observe that it suffices to consider the case when $X = H$. Indeed, suppose we know the theorem holds for a Hilbert space $H$ and let $f:X\to H$ be a homeomorphism onto. Then if $\F\subset\S(X)$ is  large in the sense of $\omega$-monotone mappings with a witnessing map $\phi$, the map $\psi:=f\circ\phi\circ f^{-1}:[H]^{\leq\omega}\to [H]^{\leq\omega}$ is a witness of the fact that $\{\overline{\psi(C)}\setsep C\in[X]^{\leq\omega}\}\subset \S(H)$ is large in the sense of $\omega$-monotone mappings and therefore there exists $\N\subset [X]^{\leq\omega}$ such that $\{\overline{\psi(C)}\setsep C\in\N\}$ is rich. Now, it is straightforward to show that $\{\overline{\phi(C)}\setsep C\in\N'\}\subset\F$ is rich, where $\N' := \{f^{-1}(C)\setsep C\in\N\}$.

Thus, let us assume that $X$ is a Hilbert space. Let $I$ be a subset of $X$ such that $\closedSpan I = X$ and $i\notin\closedSpan(I\setminus\{i\})$ for every $i\in I$ (one may take e.g. an orthonormal basis of $X$). For each $x\in X$, pick $A_x\in[I]^{\leq\omega}$ with $x\in \closedSpan A_x$ and define an $\omega$-monotone mapping $\psi:[X]^{\leq\omega}\to [I]^{\leq\omega}$ by $\psi(C):=\bigcup\{A_x\setsep x\in C\}$, $C\in[X]^{\leq\omega}$.  Let $\phi$ be a witnessing map for $\F$. Finally, define a function $\A:[I]^{\leq\omega}\to [X]^{\leq\omega}$ by putting for every $A\in[I]^{\leq\omega}$
\[\begin{split}
	\A_1(A) & : = A,\\
	\A_{n+1}(C) & := \qespan\Bigg(\A_n(A)\cup \bigcup\bigg\{\psi\Big(\phi\big(\{x_1\ldots,x_k\}\big)\Big)\setsep x_1,\ldots,x_k\in\A_n(A),\,k\in\en\bigg\}\Bigg),\qquad n\in\en,\\
	\A(A) & : = \bigcup_{n\in\en}\A_n(A).
\end{split}\]
Our aim is to show that $\F':= \{\overline{\A(A)}\setsep A\in[I]^{\leq\omega}\}$ works, that is, $\F'\subset \F$ and it is a rich family. Fix $A\in[I]^{\leq\omega}$. By induction on $n\in\en$, it is straightforward to check that $\phi(\{x_1,\ldots,x_k\})\subset \overline{\A(A)}$ for every $x_1,\ldots,x_k\in\A_n(A)$ and $k\in\en$; hence, by Lemma \ref{l:podzobrazeni}, we have $\overline{\A(A)} = \overline{\phi(\A(A))}$ which proves that $\F'\subset\F$. In order to prove the cofinality of $\F'$, fix a separable subspace $S\subset X$. Since $I$ is linearly dense in $X$, there exists $A\in[I]^{\leq\omega}$ with $S\subset \closedSpan(A)$. But since $A\subset \A(A)$ and $\overline{\A(A)}$ is linear, we have $\closedSpan A\subset \overline{\A(A)}\in\F'$; hence, $S\subset \overline{\A(A)}\in\F'$ which proves the confinality of $\F'$.

Finally, in order to see that $\F'$ is $\sigma$-closed, let us start with observing the following
\begin{itemize}
	\item[(a)] $\A$ is $\omega$-monotone;
	\item[(b)] $\overline{\A(A)} = \closedSpan(\A(A)\cap I)$ for every $A\in[I]^{\leq\omega}$;
	\item[(c)] $\A(\A(A)\cap I) = \A(A)$ for every $A\in[I]^{\leq\omega}$.
\end{itemize}
It is easy to see that $\A$ is monotone. In order to prove $\omega$-monotonicity, pick an increasing sequence $(A_k)_{k\in\en}$ of countable subsets of $I$. By monotonicity, we have $\A(\bigcup_{k=1}^\infty A_k)\supset \bigcup_{k=1}^\infty \A(A_k)$. By induction on $n\in\en$, it is straightforward to verify that $\A_n(\bigcup_{k\in\en} A_k)\subset \bigcup_{k=1}^\infty \A(A_k)$; hence, we have $\A(\bigcup_{k=1}^\infty A_k)\subset \bigcup_{k=1}^\infty \A(A_k)$. This proves (a). In order to prove (b), fix $A\in[I]^{\leq\omega}$. Since $\overline{\A(A)}$ is linear, we have $\overline{\A(A)} \supset \closedSpan(\A(A)\cap I)$. On the other hand, let us prove by induction on $n\in\en$ that $\A_n(A)\subset \closedSpan(\A(A)\cap I)$. This is obvious if $n=1$. Let us assume that $\A_n(A)\subset \closedSpan(\A(A)\cap I)$. Since $\closedSpan(\A(A)\cap I)$ is linear, in order to see that $\A_{n+1}(A)\subset \closedSpan(\A(A)\cap I)$ it suffices to observe that $\psi\Big(\phi\big(\{x_1\ldots,x_k\}\big)\Big)\subset \closedSpan(\A(A)\cap I)$ whenever $x_1,\ldots,x_k\in\A_n(A)$ which follows from the fact that $\psi\Big(\phi\big(\{x_1\ldots,x_k\}\big)\Big)\subset \A_{n+1}(A)\cap I$ for every $x_1,\ldots,x_k\in\A_n(A)$. Therefore, (b) holds. It follows from the construction that we have $\A_n(\A(A)\cap I) = \A(A)$ for $n\geq 2$; hence, (c) holds.

Finally, in order to see that $\F'$ is $\sigma$-closed, let us assume that we have an increasing sequence $\overline{\A(A_n)}$. By (b), $\closedSpan(\A(A_n)\cap I)$ is increasing and so we have $\overline{\bigcup_{n\in\en} \closedSpan(\A(A_n)\cap I)} = \closedSpan\left(\bigcup_{n\in\en} \A(A_n)\cap I\right)$. Since $i\notin\closedSpan(I\setminus\{i\})$ for every $i\in I$, we have
\[
	\closedSpan A\subset \closedSpan B\implies A\subset B,\qquad A,B\in [I]^{\leq\omega}.
\]
Hence, since $\closedSpan(\A(A_n)\cap I)$ is increasing, $\A(A_n)\cap I$ is increasing as well. Putting everything together we have
\[\begin{split}
	\overline{\bigcup \overline{A_n}} \, & \stackrel{(b)}{=} \, \overline{\bigcup_{n\in\en} \closedSpan(\A(A_n)\cap I)} = \closedSpan\left(\bigcup_{n\in\en} \A(A_n)\cap I\right) \stackrel{(c)}{=}\closedSpan\left(\bigcup_{n\in\en} \A(\A(A_n)\cap I)\cap I\right)  \stackrel{(a)}{=}\\
	& =  \closedSpan\left(\A\left(\bigcup_{n\in\en} \A(A_n)\cap I\right)\cap I\right) \stackrel{(b)}{=} \overline{\A\left(\bigcup_{n\in\en} \A(A_n)\cap I\right)}\in\F'
\end{split}\]
and $\F'$ is $\sigma$-closed.
\end{proof}

\section{Separable determination of (generalized) lushness}\label{section:GL}

For  $x^*\in S_{X^*}$ and $\varepsilon > 0$ we put $S(x^*,\varepsilon): = \{x\in B_X:\; x^*(x) > 1-\varepsilon\}$. We say that a Banach space $X$ is \emph{lush}, if for every $x,y\in S_X$ and every $\varepsilon > 0$ there exists a functional $x^*\in S_{X^*}$ such that $x\in S(x^*,\varepsilon)$ and $\operatorname{dist}(y,\aco S(x^*,\varepsilon)) < \varepsilon$.

The concept of lushness, introduced by K. Boyko, V. Kadets, M. Mart{\'{\i}}n and D. Werner in \cite{BoKaMaWe07}, is a Banach space property, which ensures that the space has numerical index 1. It was used in \cite{BoKaMaWe07} to solve a problem concerning the numerical index of a Banach space. Lushness was further investigated e.g. in \cite{BoKaMaMe09} as a property of a Banach space. Later, D. Tan, X. Hunag and R. Liu in \cite{TanHuLiu13} proved that every lush space has ``Mazur-Ulam property'', that is, every isometry from a unit sphere of a lush space $E$ onto a unit sphere of a Banach space $F$ extends to a linear isometry of $E$ and $F$. Up to our knowledge, it is still an open problem whether every Banach space has Mazur-Ulam property. In order to prove that lush spaces have Mazur-Ulam property, the authors of \cite{TanHuLiu13} introduced the notion of generalized-lushness.

A Banach space $X$ is called \emph{generalized lush} (GL) if for every $x\in S_X$ and every $\varepsilon >0$ there is $x^*\in S_{X^*}$ such that $x\in S(x^*,\varepsilon)$ and, for every $y\in S_X$,
\[
\operatorname{dist}(y,S(x^*,\varepsilon)) + \operatorname{dist}(y,-S(x^*,\varepsilon)) < 2+\varepsilon.
\]

It is proved in \cite{TanHuLiu13} that every separable lush space is (GL) and that every (GL) space has Mazur-Ulam property. Hence, every separable lush space has Mazur-Ulam property. Now, using separable reduction, it is proved that lush spaces have Mazur-Ulam property \cite{TanHuLiu13}.
The concept of (GL) Banach spaces was further investigated as a property of a Banach space by J.-D. Hardtke \cite{har}.

The main purpose of this section is to prove Theorem~\ref{t:redukceGL}, that is, ``in Asplund spaces, generalized lushness is separably determined''. The fact that ``lushness is separably determined property'' was in some sense proved in \cite[Theorem 4.2]{BoKaMaMe09}. However,  in order to further combine it with other separable determination results, we need to prove this result in the language of rich families. Hence, we prove a slightly stronger version of \cite[Theorem 4.2]{BoKaMaMe09}.

\begin{thm}\label{t:lush}Let $X$ be a Banach space. Then there exists a rich family $\F\subset\S(X)$ such that for every $V\in\F$ we have
	\[
		X\text{ is lush }\Longleftrightarrow V\text{ is lush}.
	\]
\end{thm}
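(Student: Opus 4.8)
The plan is to compare, for a separable subspace $V\subset X$ and points $x,y\in S_V$, the intrinsic ``lushness defect''
\[
\Theta_X(x,y,\varepsilon):=\inf\bigl\{\dist(y,\aco S(x^*,\varepsilon))\setsep x^*\in S_{X^*},\ x^*(x)>1-\varepsilon\bigr\}
\]
with its analogue $\Theta_V(x,y,\varepsilon)$ computed inside $V$ (replacing $S_{X^*}$, $B_X$ by $S_{V^*}$, $B_V$), so that lushness of $X$, resp.\ of $V$, reads $\Theta_X<\varepsilon$, resp.\ $\Theta_V<\varepsilon$, for all admissible $x,y,\varepsilon$. Two comparisons drive everything. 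First, every $g\in S_{V^*}$ with $g(x)>1-\varepsilon$ extends by Hahn--Banach to some $x^*\in S_{X^*}$ with $x^*(x)>1-\varepsilon$, and the relative slice $\{v\in B_V\setsep g(v)>1-\varepsilon\}$ equals $S(x^*,\varepsilon)\cap V\subset S(x^*,\varepsilon)$; passing to absolutely convex hulls and then to the infimum yields $\Theta_V(x,y,\varepsilon)\ge\Theta_X(x,y,\varepsilon)$ for all $x,y\in S_V$. Second, in the opposite direction, if $x^*\in S_{X^*}$ nearly realizes $\Theta_X(x,y,\varepsilon)$, its normalized restriction $\tilde g:=(x^*|_V)/\|x^*|_V\|\in S_{V^*}$ satisfies $\{v\in B_V\setsep \tilde g(v)>1-\varepsilon\}\supset S(x^*,\varepsilon)\cap V$ (here $\|x^*|_V\|\ge x^*(x)>0$), so any absolutely convex combination of points of $S(x^*,\varepsilon)\cap V$ lying close to $y$ certifies a small value of $\Theta_V(x,y,\varepsilon)$.

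First I would reduce the verification of lushness to a countable amount of data: a routine perturbation argument shows that $X$ is lush iff $\Theta_X(x,y,\varepsilon)<\varepsilon$ for all $x,y$ in a fixed countable dense subset of $S_X$ and all rational $\varepsilon>0$ (and similarly for each $V$), using that $\dist(y,\cdot)$ is $1$-Lipschitz, that $\aco S(x^*,\varepsilon')\subset\aco S(x^*,\varepsilon)$ for $\varepsilon'<\varepsilon$, and that $x^*(x')>1-\varepsilon'$ forces $x^*(x)>1-\varepsilon$ once $\|x-x'\|+\varepsilon'<\varepsilon$. Then I would build the family by closing countable sets under witnesses. Fixing a choice that assigns to each triple $(x,y,\varepsilon)$ a near-optimal $x^*$ together with finitely many points $u_1,\dots,u_m\in S(x^*,\varepsilon)$ and scalars realizing $\dist(y,\aco S(x^*,\varepsilon))$ up to an arbitrarily small rational error, I would let $\phi(C)$ be the $\qespan$ of $C$ together with all such $u_i$ for $x,y\in C$ and rational $\varepsilon$, iterated to a fixpoint. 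This $\phi$ is $\omega$-monotone (monotonicity is clear and $\omega$-continuity follows from the finitary dependence of each witness), and the second comparison above gives $\Theta_V(x,y,\varepsilon)=\Theta_X(x,y,\varepsilon)$ for every $V=\overline{\phi(C)}$ and all $x,y\in C\cap S_V$. In particular, if $X$ is lush then $\Theta_V<\varepsilon$ on a dense set, so $V$ is lush.

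The remaining, and genuinely hard, point is the converse: when $X$ is not lush I must arrange that \emph{every} member of the family is non-lush. By the first comparison any separable subspace containing a non-lushness witness $(x_0,y_0,\varepsilon_0)$ of $X$ is itself non-lush, since $\Theta_V(x_0,y_0,\varepsilon_0)\ge\Theta_X(x_0,y_0,\varepsilon_0)\ge\varepsilon_0$; the trouble is that the forward closure of the previous paragraph gives no reason for such a witness to lie in a prescribed member, and I expect this to be the main obstacle. The cleanest resolution is to feed a fixed witness into the construction: if $X$ is lush we are already done, and otherwise we fix one non-lushness witness $(x_0,y_0,\varepsilon_0)$ (which exists by hypothesis) and replace $\phi$ by $C\mapsto\phi(C\cup\{x_0,y_0\})$, so that every member contains $x_0,y_0$ and is therefore non-lush. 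Since $X$ is a Banach space, Theorem~\ref{t:mon->rich} then converts $\{\overline{\phi(C)}\setsep C\in[X]^{\le\omega}\}$, which is large in the sense of $\omega$-monotone mappings and all of whose members satisfy the required equivalence, into a rich subfamily with the same property. I would also remark that the backward step can be obtained uniformly, without the case distinction, by running the whole argument inside a suitable model $M$ with $X\in M$ and taking $V=X_M$: elementarity supplies the witnessing points for the forward direction and reflects the existential statement ``$X$ is not lush'' into $M$, placing a witness $(x_0,y_0)$ in $X\cap M\subset V$, while richness of $\{X_M\}$ is exactly Theorem~\ref{t:gener}.
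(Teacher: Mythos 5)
Your argument is correct in outline, but it takes a genuinely different route from the paper's. The paper decomposes the task into two rich families and intersects them: when $X$ is lush, $\R_1$ is simply the family of \emph{all} lush separable subspaces, whose cofinality is quoted from \cite[Theorem 4.2]{BoKaMaMe09} and whose $\sigma$-closedness follows from the functional-free characterization of lushness in Theorem~\ref{t:equivLush} (condition (ii) quantifies only over points $x_1,x_2\in B_X$ and, via (iii), survives passage to the closure of an increasing union because a dense subspace suffices); when $X$ is not lush, $\R_2$ consists of all separable subspaces containing a fixed witness pair $x,y$ for the failure of condition (ii), which is inherited by subspaces since $B_V\subset B_X$. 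You instead build the cofinal part by hand as an $\omega$-monotone closure under chosen slice witnesses --- in effect reproving the cited cofinality result --- and you obtain richness from Theorem~\ref{t:mon->rich} rather than verifying $\sigma$-closedness directly; your Hahn--Banach extension/restriction comparison of slices plays the role of the paper's functional-free condition (ii), and your treatment of the non-lush case (feed the witness into every member) coincides with $\R_2$. What each buys: the paper's proof is much shorter because it reuses \cite{BoKaMaMe09}, while yours is self-contained and is structurally the same pattern the paper itself uses for generalized lushness in Proposition~\ref{p:redukceGL} combined with Theorem~\ref{t:main2}. Two routine points you should tighten: witnesses must be assigned to normalized pairs $x/\|x\|$, $y/\|y\|$ for all nonzero $x,y$ (a $\qe$-linear set $\phi(C)$ need not meet $S_V$ densely, cf.\ the formulation of Lemma~\ref{l:isGL}), and you should restrict to $\varepsilon\in(0,1)$ so that $\|x^*|_V\|\ge x^*(x)>0$ and the inequality $\tilde g(v)\ge x^*(v)>1-\varepsilon$ is legitimate.
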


Further, we apply our results to prove that every Asplund lush space is (GL).

\begin{corollary}\label{c:lushIsGL}Let $X$ be an Asplund lush space. Then $X$ is (GL).\end{corollary}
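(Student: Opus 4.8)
The plan is to combine the two separable determination results already established (the Asplund part of Theorem~\ref{t:redukceGL} and Theorem~\ref{t:lush}) with the known fact, due to \cite{TanHuLiu13}, that every separable lush space is (GL). First I would invoke the Asplund part of Theorem~\ref{t:redukceGL} to obtain a rich family $\F_1\subset\S(X)$ such that for every $V\in\F_1$ we have the equivalence ``$X$ is (GL) $\iff$ $V$ is (GL)''. Next I would apply Theorem~\ref{t:lush} to obtain a rich family $\F_2\subset\S(X)$ such that for every $V\in\F_2$ we have ``$X$ is lush $\iff$ $V$ is lush''.

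By Proposition~\ref{p:boMO}, the family $\F_1\cap\F_2$ is again rich; in particular it is cofinal, hence nonempty, so I may fix some $V\in\F_1\cap\F_2$. Since every element of $\S(X)$ is by definition a closed separable subspace, $V$ is separable. Because $X$ is lush and $V\in\F_2$, the equivalence in Theorem~\ref{t:lush} yields that $V$ is lush. As $V$ is now a \emph{separable} lush space, the result of \cite{TanHuLiu13} guarantees that $V$ is (GL). Finally, because $V\in\F_1$, the Asplund equivalence in Theorem~\ref{t:redukceGL} transfers (GL) back from $V$ to $X$, and we conclude that $X$ is (GL).

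The only point that requires genuine care — and the reason Proposition~\ref{p:boMO} is essential — is that a single separable subspace $V$ must simultaneously serve both transfers: the passage of lushness from $X$ down to $V$, and the passage of (GL) from $V$ back up to $X$. Either rich family on its own would be insufficient, since the two families could a priori fail to share a suitable subspace; it is precisely the stability of richness under countable intersections that furnishes a common $V$. Beyond this bookkeeping there is no analytic obstacle, as all the hard content is encapsulated in the two separable determination theorems together with the separable case of \cite{TanHuLiu13}, which we are entitled to assume.
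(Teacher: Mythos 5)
Your argument is correct and coincides with the paper's own proof: both intersect the rich families from Theorem~\ref{t:lush} and the Asplund part of Theorem~\ref{t:redukceGL} via Proposition~\ref{p:boMO}, pick a common separable $V$, apply the separable case from \cite{TanHuLiu13} to conclude $V$ is (GL), and transfer back to $X$. The only cosmetic difference is that the paper needs just the implication ``$V$ is (GL) $\implies$ $X$ is (GL)'' rather than the full equivalence you invoke.
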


This follows already from some known results, see Remark~\ref{r:lushIsGLOriginalProof}; however, we give a completely different proof of this fact.

\begin{proof}[Proof of Corollary~\ref{c:lushIsGL}]
Let $X$ be an Asplund lush space. By Theorem~\ref{t:lush}, Theorem~\ref{t:redukceGL} and Proposition~\ref{p:boMO} (we intersect rich families from Theorem~\ref{t:lush} and Theorem~\ref{t:redukceGL} and pick one space from the intersection), there is a closed separable subspace $V\subset X$ such that
\[
X\text{ is lush }\Longleftrightarrow V\text{ is lush}\qquad\quad\text{and}\qquad\quad V\text{ is (GL) }\implies X\text{ is (GL)}.
\]
Since $X$ is lush, $V$ is separable and lush; hence, by \cite[Example 2.5]{TanHuLiu13}, $V$ is (GL). Thus, $X$ is (GL).
\end{proof}
 
\begin{remark}\label{r:lushIsGLOriginalProof}Corollary~\ref{c:lushIsGL} follows already from some known results. Namely, by \cite[Theorem 2.1 (d)$\implies$ (c)]{BoKaMaMe09}, every lush Asplund space possesses the following property: for every $x\in S_X$ and every $\varepsilon > 0$, there is $x^*\in S_{X^*}$ such that $x\in S(x^*,\varepsilon)$ and for every $y\in S_X$ we have $\dist(y,\aco(S(x^*,\varepsilon))) < \varepsilon$. From this property it follows that $X$ is (GL).
\end{remark}
 
Let us start with the proof of Theorem~\ref{t:lush}. Similarly as in \cite{BoKaMaMe09}, we use the following result.

\begin{thm}[{\cite[Theorem 4.1]{BoKaMaMe09}}]\label{t:equivLush}Let $X$ be a real Banach space and $D\subset X$ a dense subspace. Then the following conditions are equivalent.
\begin{enumerate}[\upshape (i)]
	\item\label{lush:i} $X$ is lush
	\item\label{lush:ii} For every $x,y\in S_X$ and $\varepsilon > 0$, there are $\lambda_1,\lambda_2\geq 0$ with $\lambda_1 + \lambda_2 = 1$ and $x_1, x_2\in B_X$ such that $\|x + x_1 + x_2\| > 3 - \varepsilon$ and $\|y - (\lambda_1x_1 - \lambda_2 x_2)\| < \varepsilon$.
	\item\label{lush:iii} For every $x,y\in S_X\cap D$ and $\varepsilon > 0$, there are $\lambda_1,\lambda_2\geq 0$ with $\lambda_1 + \lambda_2 = 1$ and $x_1, x_2\in B_X$ such that $\|x + x_1 + x_2\| > 3 - \varepsilon$ and $\|y - (\lambda_1x_1 - \lambda_2 x_2)\| < \varepsilon$.
 \end{enumerate}
\end{thm}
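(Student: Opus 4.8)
The plan is to prove the full equivalence by establishing the two ``cheap'' equivalences $(\mathrm{i})\Leftrightarrow(\mathrm{ii})$ and $(\mathrm{ii})\Leftrightarrow(\mathrm{iii})$ separately, the first by a Hahn--Banach separation together with a convexity manipulation, the second by a density/normalization argument. The implication $(\mathrm{ii})\Rightarrow(\mathrm{iii})$ needs no work at all: $(\mathrm{iii})$ is literally the restriction of $(\mathrm{ii})$ to those pairs $x,y$ which happen to lie in $S_X\cap D$, so it is a special case. For the converse $(\mathrm{iii})\Rightarrow(\mathrm{ii})$ I would fix $x,y\in S_X$ and $\varepsilon>0$, use the density of $D$ to choose $\hat x,\hat y\in D$ close to $x,y$, and then pass to $\hat x/\|\hat x\|,\hat y/\|\hat y\|\in S_X\cap D$ (these stay in $D$ because $D$ is a subspace), which can be kept within, say, $\varepsilon/3$ of $x,y$. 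Applying $(\mathrm{iii})$ to this normalized pair with a sufficiently small parameter yields $\lambda_1,\lambda_2$ and $x_1,x_2\in B_X$, and the two inequalities required for the original $x,y$ follow from the triangle inequality, the approximation errors being absorbed into $\varepsilon$.

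For $(\mathrm{ii})\Rightarrow(\mathrm{i})$ I would fix $x,y\in S_X$ and $\varepsilon>0$ and apply $(\mathrm{ii})$ with the \emph{same} $\varepsilon$, obtaining $\lambda_1,\lambda_2\ge0$ with $\lambda_1+\lambda_2=1$ and $x_1,x_2\in B_X$ satisfying $\|x+x_1+x_2\|>3-\varepsilon$ and $\|y-(\lambda_1x_1-\lambda_2x_2)\|<\varepsilon$. Choosing by Hahn--Banach a functional $x^*\in S_{X^*}$ with $x^*(x+x_1+x_2)=\|x+x_1+x_2\|$, and using that each of $x^*(x),x^*(x_1),x^*(x_2)$ is at most $1$, forces each of them to exceed $1-\varepsilon$; hence $x\in S(x^*,\varepsilon)$ and $x_1,x_2\in S(x^*,\varepsilon)$. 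Since $\lambda_1+\lambda_2=1$, the point $\lambda_1x_1-\lambda_2x_2$ is an absolutely convex combination of $x_1,x_2$, so it lies in $\aco S(x^*,\varepsilon)$, and therefore $\dist(y,\aco S(x^*,\varepsilon))\le\|y-(\lambda_1x_1-\lambda_2x_2)\|<\varepsilon$. This is precisely lushness.

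The remaining and most delicate implication is $(\mathrm{i})\Rightarrow(\mathrm{ii})$. Given $x,y\in S_X$ and $\varepsilon>0$, I would invoke lushness with a small auxiliary parameter $\varepsilon'$ to produce $x^*\in S_{X^*}$ with $x^*(x)>1-\varepsilon'$ and a point $z\in\aco S(x^*,\varepsilon')$ with $\|y-z\|<\varepsilon'$. Writing $z$ as a finite absolutely convex combination $\sum_i t_i s_i$ of elements $s_i\in S(x^*,\varepsilon')$ with $\sum_i|t_i|\le1$ and grouping the summands by the sign of $t_i$, I would rewrite $z=\lambda_1u_1-\lambda_2u_2$, where $\lambda_1,\lambda_2\ge0$, $\lambda_1+\lambda_2\le1$, and $u_1,u_2$ are convex combinations of elements of $S(x^*,\varepsilon')$. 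In particular $u_1,u_2\in B_X$ and, by linearity of $x^*$, $x^*(u_i)>1-\varepsilon'$, so $\|x+u_1+u_2\|\ge x^*(x+u_1+u_2)>3-3\varepsilon'$.

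The one genuine obstacle is that $\lambda_1+\lambda_2$ may fall strictly below $1$, which the statement $(\mathrm{ii})$ does not allow. Here I would exploit that $\|y\|=1$ and $\|y-z\|<\varepsilon'$ force $\|z\|>1-\varepsilon'$, and since $\|z\|\le\lambda_1+\lambda_2$ this gives $\lambda_1+\lambda_2>1-\varepsilon'$. Dividing the two weights by their sum $s:=\lambda_1+\lambda_2$ yields $\mu_1,\mu_2\ge0$ with $\mu_1+\mu_2=1$, and $\mu_1u_1-\mu_2u_2=z/s$ differs from $z$ by at most $\|z\|\,(1/s-1)<\varepsilon'/(1-\varepsilon')$, hence differs from $y$ by less than $\varepsilon'+\varepsilon'/(1-\varepsilon')$. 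Taking $x_1:=u_1$, $x_2:=u_2$, $\lambda_i:=\mu_i$, and choosing $\varepsilon'$ small enough that both $3\varepsilon'$ and the renormalization error stay below $\varepsilon$, delivers $(\mathrm{ii})$. This normalization bookkeeping is the only technical point; every other step is a direct unwinding of the definitions of lushness and of $\aco S(x^*,\varepsilon)$.
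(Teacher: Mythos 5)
Your proposal is correct, but it takes a much longer route than the paper, which essentially contains no proof at all: the paper disposes of the theorem in one line, citing \cite[Theorem 4.1]{BoKaMaMe09} for the equivalence (i)$\Leftrightarrow$(ii) and declaring (ii)$\Leftrightarrow$(iii) evident. Your density-plus-renormalization argument for (iii)$\Rightarrow$(ii) is exactly the routine verification hiding behind that ``evident'' (and it correctly uses that $D$ is a \emph{subspace}, so that $\hat x/\|\hat x\|$ stays in $D$), while your two halves of (i)$\Leftrightarrow$(ii) reconstruct the cited result from \cite{BoKaMaMe09}: the Hahn--Banach norming functional forcing $x^*(x),x^*(x_1),x^*(x_2)>1-\varepsilon$ for (ii)$\Rightarrow$(i), and the sign-splitting of the absolutely convex combination plus renormalization of the weights for (i)$\Rightarrow$(ii), are the standard ingredients there. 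So what your version buys is self-containedness at the cost of length; what the paper's version buys is brevity, since the only content it actually needs to add to the literature is the trivial (ii)$\Leftrightarrow$(iii). One micro-gap worth patching in your (i)$\Rightarrow$(ii): when you group the summands of $z=\sum_i t_i s_i$ by sign, one of the two groups may be empty, leaving $u_1$ or $u_2$ undefined; since the corresponding weight is $0$, take the missing $u_i$ to be any element of $S(x^*,\varepsilon')$ --- e.g.\ $x$ itself, which lies there --- and note this choice is genuinely needed, because the bound $\|x+u_1+u_2\|\ge x^*(x+u_1+u_2)>3-3\varepsilon'$ uses $x^*(u_i)>1-\varepsilon'$ for \emph{both} $u_1$ and $u_2$, not just for those carrying positive weight. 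With that fixed, your bookkeeping is sound: $\|z\|\le\lambda_1+\lambda_2=:s$ together with $\|z\|>1-\varepsilon'$ gives $s>1-\varepsilon'$, and the renormalization error satisfies $\|z/s-z\|=\|z\|\,(1/s-1)\le 1-s<\varepsilon'$, so choosing $\varepsilon'\le\varepsilon/3$ closes the argument.
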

\begin{proof}\eqref{lush:i}$\Leftrightarrow$\eqref{lush:ii} is proved in \cite[Theorem 4.1]{BoKaMaMe09}. The equivalence \eqref{lush:ii}$\Leftrightarrow$\eqref{lush:iii} is evident.
\end{proof}

\begin{proof}[Proof of Theorem~\ref{t:lush}]First, we will find a rich  family $\R_1\subset\S(X)$ such that for every $V\in\R_1$ we have
\[
X\text{ is lush }\implies V\text{ is lush}.
\]
If $X$ is not lush, we put $\R_1:=S(X)$. Otherwise, define $\R_1\subset\S(X)$ as the family consisting of all $V\in\S(X)$ such that $V$ is lush. We shall show that $\R_1$ is a rich family.
This is obvious if $X$ is not lush; hence, let us assume that $X$ is lush. By \cite[Theorem 4.2]{BoKaMaMe09}, $\R_1$ is cofinal. For checking the $\sigma$-completeness of $\R_1$, consider any increasing sequence $(V_i)_{i\in\en}$ of elements in $\R_1$. We need to prove that $V:=\overline{\bigcup_{i=1}^\infty V_i}$ is lush. Since $(V_i)_{i\in\en}$ is increasing, by Theorem~\ref{t:equivLush} \eqref{lush:i}$\implies$\eqref{lush:ii}, condition \eqref{lush:iii} in Theorem~\ref{t:equivLush}  is satisfied with $D = \bigcup_{i=1}^\infty V_i$ and $X = V$. Hence, $V$ is lush.

Now, we will find a rich  family $\R_2\subset\S(X)$ such that for every $V\in\R_2$ we have
\[
X\text{ is not lush }\implies V\text{ is not lush}.
\]
If $X$ is lush, we put $\R_2:=S(X)$. Otherwise, by Theorem~\ref{t:equivLush} \eqref{lush:ii}$\implies$\eqref{lush:i}, there are $x,y\in S_X$ and $\varepsilon > 0$ such that for every $\lambda_1,\lambda_2\geq 0$ with $\lambda_1 + \lambda_2 = 1$ and $x_1, x_2\in B_X$ we have $\|x + x_1 + x_2\| \leq 3 - \varepsilon$ or $\|y - (\lambda_1x_1 - \lambda_2 x_2)\| \geq \varepsilon$. Hence, the family $\R_2:=\{V\in\S(X)\setsep x,y\in V\}$ is a rich family such that each member of the family is not lush.

Finally, it remains to put $\A:=\R_1\cap\R_2$. This is a rich family because, by the construction above, we have $\A = \R_1$ or $\A =\R_2$ depending on the ``lushness'' of $X$. It is obvious that for every $V\in\A$, $V$ is lush if and only if $X$ is lush.
\end{proof}

In the remainder of this paper we prove the main result of this section, Theorem~\ref{t:redukceGL}. By Theorem \ref{t:main2} and Proposition \ref{p:boMO}, it follows from the following two results.

\begin{proposition}\label{p:redukceGL}Let $X$ be a Banach space. Then there exists a family $\F\subset\S(X)$ which is large in the sense of $\omega$-monotone mappings such that
\[
\forall F\in\F:\quad X\text{ is (GL)}\implies F\text{ is (GL).}
\]
\end{proposition}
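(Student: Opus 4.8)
The plan is to construct an explicit $\omega$-monotone witnessing map $\phi:[X]^{\leq\omega}\to[X]^{\leq\omega}$ so that for every $C$, the separable subspace $F=\overline{\phi(C)}$ inherits the (GL) property from $X$. Fixing a countable dense subset $D_F$ of each candidate subspace is not enough by itself; the defining property of (GL) quantifies over \emph{all} $y\in S_X$ and requires a functional $x^*\in S_{X^*}$, so I would first reformulate (GL) in a ``rational'' and ``separably-testable'' form. Concretely, using a Goldstine/density argument one checks that $X$ is (GL) iff for every $x$ in a dense subset of $S_X$ and every rational $\varepsilon>0$ there is $x^*\in S_{X^*}$ with $x\in S(x^*,\varepsilon)$ and $\dist(y,S(x^*,\varepsilon))+\dist(y,-S(x^*,\varepsilon))<2+\varepsilon$ for all $y$ in a dense subset of $S_X$. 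The point is that the witnessing $x^*$ can be tested against countably many $y$'s, and $\dist(y,\pm S(x^*,\varepsilon))$ is computed against a countable dense subset of the slice.

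Next I would build $\phi$ so that $\overline{\phi(C)}$ is closed under selecting these witnesses. The natural construction is an iterative closure: starting from $C$, at each stage I add, for each $x$ in a countable dense set of the current subspace and each rational $\varepsilon$, a witnessing functional $x^*_{x,\varepsilon}\in S_{X^*}$ (chosen once and for all by a fixed selector, to guarantee monotonicity) together with countably many points realizing the two distances defining (GL), i.e. points $z^\pm\in\pm S(x^*_{x,\varepsilon},\varepsilon)$ that nearly attain $\dist(y,\pm S(x^*,\varepsilon))$ for each $y$ in the countable dense set. Iterating this step $\omega$ times and taking the union produces $\phi(C)$; then $\overline{\phi(C)}=:F$ is a separable subspace in which, for the chosen $x^*$ restricted to $F$, the slices $S(x^*|_F,\varepsilon)$ and the realizing points $z^\pm$ all live, so the (GL) inequality verified in $X$ transfers verbatim to $F$. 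The key subtlety is that $S(x^*|_F,\varepsilon)=S(x^*,\varepsilon)\cap F$ need not hold exactly, so I would instead ensure that the \emph{witnesses} $z^\pm$ chosen in $X$ already lie in $F$ and satisfy $x^*(z^\pm)>1-\varepsilon$, which keeps them in the $F$-slice of $x^*|_F$; thus the computation of $\dist(y,\pm S(x^*|_F,\varepsilon))$ in $F$ is bounded above by the $X$-distances, yielding the (GL) inequality in $F$.

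Verifying that $\phi$ is $\omega$-monotone is then routine and mirrors the inductive arguments already used for $\G$ in Theorem~\ref{t:rich->mon} and for $\A$ in Theorem~\ref{t:mon->rich}: monotonicity is immediate from the union-based definition, and $\omega$-monotonicity follows by an induction on the stage $n$ showing $\phi_n(\bigcup_k C_k)\subset\bigcup_k\phi(C_k)$, using that each selection step depends only on finitely many previously constructed points. The requirement $\phi(C)\supset C$ holds by construction. I would emphasize that we do \emph{not} attempt to show the resulting family is $\sigma$-closed directly --- indeed, as the introduction remarks, it is unclear whether it is rich --- which is precisely why we route through the $\omega$-monotone formulation and then invoke Theorem~\ref{t:main2}(ii) to extract a rich subfamily.

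The main obstacle is the one flagged above: matching the slices $S(x^*,\varepsilon)$ computed in $X$ with those computed in $F=\overline{\phi(C)}$ for the restricted functional $x^*|_F$. Because restricting a functional can only \emph{enlarge} a slice relative to the subspace (one loses the constraint coming from directions outside $F$), the safe direction is to transport the \emph{existence} of good witnesses from $X$ down into $F$: I make sure the witnessing points realizing both distances are placed into $\phi(C)$, so that all the data certifying the (GL) inequality for $x$ at level $\varepsilon$ in $X$ is available inside $F$. Getting the quantifier over $y\in S_F$ right --- reducing it to a countable dense set and arranging that the realizers for each such $y$ are captured in the next stage of the construction --- is the technically delicate point, and the density reformulation of (GL) in the first paragraph is what makes it go through.
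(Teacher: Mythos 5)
Your proposal is correct and follows essentially the same route as the paper's proof: reduce (GL) to a countable dense-set test (the paper's Lemma \ref{l:isGL}), fix once-and-for-all selectors for the witnessing functionals and for points realizing the two distances, close $C$ iteratively (taking rational spans) under adding these realizer points, check $\omega$-monotonicity by the standard stagewise induction, and transfer the inequality to $F=\overline{\phi(C)}$ using that the realizers lie in $S(x^*,\varepsilon)\cap F$, which is contained in the slice of the normalized restriction $x^*|_F/\|x^*|_F\|$. The slice-enlargement point you flag is exactly the observation the paper uses, so no further comparison is needed.
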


\begin{proposition}\label{p:redukceGLAsplund}Let $X$ be an Asplund space. Then there exists a rich family $\A\subset\S(X)$ such that for every $V\in\A$ we have
	$$V\text{ is (GL) }\implies X\text{ is (GL)}.$$
\end{proposition}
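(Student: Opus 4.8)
The plan is to reduce immediately to the case where $X$ is not (GL). If $X$ is (GL), the conclusion of the implication is always true, so any rich family works and one takes $\A:=\S(X)$ (which is cofinal and $\sigma$-closed, hence rich). So assume $X$ is not (GL) and fix once and for all a witness $x_0\in S_X$ and $\varepsilon_0>0$ such that for every $x^*\in S_{X^*}$ with $x^*(x_0)>1-\varepsilon_0$ there is $y\in S_X$ with $\dist(y,S(x^*,\varepsilon_0))+\dist(y,-S(x^*,\varepsilon_0))\ge 2+\varepsilon_0$. The goal is then to produce a rich family $\A$ every member of which is \emph{not} (GL); for such $\A$ the premise ``$V$ is (GL)'' is false for each $V\in\A$, so the required implication holds vacuously. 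This is the exact analogue of the family $\R_2$ built in the proof of Theorem~\ref{t:lush}. Note that a direct proof of ``$V$ (GL)$\implies X$ (GL)'' for a single fixed separable $V$ is hopeless, since the conclusion quantifies over all $x\in S_X$, most of which lie far from $V$; only the failure side localizes.

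To build $\A$ I would invoke Theorem~\ref{t:gener}: suitable models generate nice rich families in the Banach space $X$. I would apply it with a countable base set $Y$ containing $x_0$, the space $X$ together with its algebraic structure and norm, and all the sets from Lemma~\ref{l:basics-in-M}, and with a finite list $\Phi$ containing the formulas of Lemma~\ref{l:basics-in-M}, the formula $\psi(x^*)$ expressing ``$\exists y\in S_X\colon\ \dist(y,S(x^*,\varepsilon_0))+\dist(y,-S(x^*,\varepsilon_0))\ge 2+\varepsilon_0$'', and the formulas needed to make the following dual-density property absolute for the models. The resulting family $\A=\{X_M\setsep M\in\M\}$ is rich, and since $x_0\in Y$, every $V=X_M\in\A$ contains $x_0$ and inherits the closure properties of Lemma~\ref{l:basics-in-M}.

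It remains to check that each $V=X_M\in\A$ fails to be (GL). The one place the Asplund hypothesis enters is the following reflection of dual structure: because $X$ is Asplund, the separable subspace $V$ has separable dual, and by standard properties of suitable models (see \cite{c}) one may arrange that $W:=\{x^*|_V\setsep x^*\in S_{X^*}\cap M\}$ is norm-dense in $S_{V^*}$. Suppose toward a contradiction that $V$ is (GL), and fix a rational $\varepsilon\in(0,\varepsilon_0]$. Applying (GL) to $x_0\in S_V$ with room to spare and approximating the resulting functional by an element of the dense set $W$, a routine perturbation lets me take the witness of the form $w=x^*|_V$ with $x^*\in S_{X^*}\cap M$ satisfying $x^*(x_0)>1-\varepsilon$ and $\dist(y,S_V(w,\varepsilon))+\dist(y,-S_V(w,\varepsilon))<2+\varepsilon$ for every $y\in S_V$. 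Since $x^*\in M$ and $x^*(x_0)>1-\varepsilon_0$, non-(GL)ness of $X$ gives $X\models\psi(x^*)$; as $\psi$ is absolute for $M$, the existential witness can be taken in $X\cap M\subset V$, i.e. there is $y\in S_V$ with $\dist(y,S(x^*,\varepsilon_0))+\dist(y,-S(x^*,\varepsilon_0))\ge 2+\varepsilon_0$. Now $S_V(w,\varepsilon)=S(x^*,\varepsilon)\cap V\subset S(x^*,\varepsilon_0)$ and distances computed in $V$ dominate those computed in $X$, so for this $y$ we obtain $2+\varepsilon_0\le \dist(y,S_V(w,\varepsilon))+\dist(y,-S_V(w,\varepsilon))<2+\varepsilon\le 2+\varepsilon_0$, a contradiction. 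Hence every $V\in\A$ is not (GL), and the implication holds vacuously throughout the rich family $\A$.

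The main obstacle is exactly the step that forces the Asplund assumption: to contradict ``$V$ is (GL)'' one must exhibit a bad $y$ \emph{inside} $V$ for the functional witnessing (GL), whereas non-(GL)ness of $X$ only supplies a bad $y\in S_X$. Separability of $V^*$ lets a dense subset of $S_{V^*}$ be captured as restrictions of $S_{X^*}\cap M$, so the witnessing functional may be taken in $M$, and absoluteness of $\psi$ then reflects the existential $y$ into $X\cap M\subset V$. This is precisely the feature absent in Theorem~\ref{t:lush}, where failure of lushness is a \emph{non-existence} statement (no good $x_1,x_2\in B_X$), automatically inherited by $B_V\subset B_X$ with no need to localize a witness and hence with no Asplund hypothesis. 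If one prefers to avoid elementary submodels, the same data (restrictions of functionals and the reflected bad points) can be organized into an $\omega$-monotone witnessing map whose values play the role of $X\cap M$, and Theorem~\ref{t:main2} together with Proposition~\ref{p:boMO} then converts the resulting family into a rich one; but the submodel formulation makes the reflection of the $\exists y$ most transparent.
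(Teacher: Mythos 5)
Your proof is correct, and its skeleton --- reduce to the case where $X$ is not (GL), fix the witness $(x_0,\varepsilon_0)$, and build a rich family every member of which fails (GL) by reflecting into the separable subspace both a norm-dense set of dual functionals and the bad points $y$ --- is exactly the paper's. The difference is the formalism carrying out the two reflections. The paper stays entirely inside the rich-family language: the dual reflection is done with an Asplund generator $G$ and the rich family $\R'$ of rectangles $\closedSpan C\times\closedSpan G(C)$ from \cite[Theorem 2.3]{cuthFab} (so that $G(C)|_V$ is norm-dense in $V^*$ with norms preserved), the witness reflection is done with an explicit selection $x^*\mapsto I(x^*)$ under which the family $\R_2$ is closed, cofinality and $\sigma$-closedness of $\R_2$ are verified by hand, and Lemma~\ref{l:notGL} finishes. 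You instead route everything through suitable models: Theorem~\ref{t:gener} hands you richness for free, the submodel version of the Asplund duality (restrictions of $X^*\cap M$ dense in $(X_M)^*$, from \cite{c}) replaces the Asplund generator, and absoluteness of the formula $\exists y\,(\dots)$ replaces the selection $I$ --- absoluteness is a Skolem function in disguise, so the two mechanisms are interchangeable, and your inline perturbation is just the (omitted) proof of Lemma~\ref{l:notGL}. Your route buys brevity (no cofinality or $\sigma$-closedness check, no bookkeeping of the generating sets $C$) at the cost of importing the logic machinery; the paper's route is longer but keeps the proof free of relativization and absoluteness, in line with its stated aim of making such results usable without set-theoretic vocabulary. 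Two small points to tidy: put $\varepsilon_0$ (not only $x_0$) into the base set $Y$ so that $\psi$ has all its parameters in $M$; and note that ``$V$ has separable dual, hence $W$ is dense in $S_{V^*}$'' is not a one-line consequence of separability --- the density of the restrictions of $X^*\cap M$ is precisely the nontrivial Asplund-space theorem you are citing, playing the role that \cite[Theorem 2.3]{cuthFab} plays in the paper's proof, so that citation is doing real work.
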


First, let us note that in the definition of (GL) spaces we may work only with a dense subset of $X$ (resp. $X^*$). This is the content of the following two Lemmas. Since the proofs are straightforward and easy, we omit them.

\begin{lemma}\label{l:isGL}Let $X$ be a Banach space and let $D\subset X$ be a dense subset of $X$. Let us assume that for every $x\in D$ and $q\in(0,\infty)\cap \qe$ there exists $x^*\in X^*$ with $\tfrac{x}{\|x\|}\in S\Big(x^*,\varepsilon\Big)$ such that, for every $y\in D$,
\[
\dist\Big(\tfrac{y}{\|y\|},S(x^*,q)\Big) + \operatorname{dist}\Big(\tfrac{y}{\|y\|},-S(x^*,q)\Big) < 2+q.
\]
Then $X$ is (GL).
\end{lemma}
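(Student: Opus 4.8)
The plan is to run a standard approximation argument that upgrades the hypothesis — stated only for normalized points of the dense set $D$ and for rational slice parameters — to the full definition of (GL). First I would fix an arbitrary $x\in S_X$ and $\varepsilon>0$, choose a rational $q$ with $0<q<\varepsilon$, and pick $x_0\in D$ so close to $x$ that $q+\|x-\tfrac{x_0}{\|x_0\|}\|<\varepsilon$. Here one uses the elementary estimate $\|x-\tfrac{x_0}{\|x_0\|}\|\le 2\|x-x_0\|$ (valid since $x\in S_X$, because $\bigl|\,\|x_0\|-1\,\bigr|=\bigl|\,\|x_0\|-\|x\|\,\bigr|\le\|x_0-x\|$), so that the normalized approximant $\tfrac{x_0}{\|x_0\|}$ can be driven arbitrarily close to $x$. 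Applying the hypothesis to $x_0\in D$ and $q$ produces a functional $x^*\in S_{X^*}$ with $\tfrac{x_0}{\|x_0\|}\in S(x^*,q)$ and the two-distance estimate valid for every $y\in D$.

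Next I would verify that this same $x^*$ witnesses the (GL) condition for $x$ at scale $\varepsilon$. For membership, $x^*(x)\ge x^*(\tfrac{x_0}{\|x_0\|})-\|x^*\|\,\|x-\tfrac{x_0}{\|x_0\|}\|>(1-q)-\|x-\tfrac{x_0}{\|x_0\|}\|>1-\varepsilon$, so $x\in S(x^*,\varepsilon)$. For the distance inequality, the key monotonicity observation is that $q<\varepsilon$ forces $S(x^*,q)\subset S(x^*,\varepsilon)$ (and likewise $-S(x^*,q)\subset -S(x^*,\varepsilon)$), since the defining inequality $x^*(z)>1-q$ is stricter than $x^*(z)>1-\varepsilon$. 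Consequently distances to the $\varepsilon$-slices do not exceed distances to the $q$-slices, and it suffices to bound $\dist(y,S(x^*,q))+\dist(y,-S(x^*,q))$.

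To bound that sum for an arbitrary $y\in S_X$, I would use that $z\mapsto\dist(z,S(x^*,q))$ is $1$-Lipschitz. Choosing $y_0\in D$ with $\tfrac{y_0}{\|y_0\|}$ close enough to $y$ that $q+2\|y-\tfrac{y_0}{\|y_0\|}\|<\varepsilon$, the Lipschitz estimate yields
\begin{align*}
\dist(y,S(x^*,q))+\dist(y,-S(x^*,q)) &\le \dist(\tfrac{y_0}{\|y_0\|},S(x^*,q))+\dist(\tfrac{y_0}{\|y_0\|},-S(x^*,q))\\
&\quad+2\|y-\tfrac{y_0}{\|y_0\|}\|,
\end{align*}
and the hypothesis applied to $\tfrac{y_0}{\|y_0\|}$ (with $y_0\in D$) bounds the first two terms by $2+q$. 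Altogether the sum is strictly below $2+q+2\|y-\tfrac{y_0}{\|y_0\|}\|<2+\varepsilon$, which combined with the monotonicity step gives the required estimate for the $\varepsilon$-slices.

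I do not expect a genuine obstacle, as the argument is a pure density/continuity reduction; the care needed is organizational. The one point worth stressing is that the functional $x^*$ is fixed once $x$ and $\varepsilon$ are chosen, yet must serve \emph{all} $y\in S_X$ simultaneously. This causes no trouble because for each $y$ the approximant $y_0\in D$ may be chosen independently, and the error term $\|y-\tfrac{y_0}{\|y_0\|}\|$ can be forced below $\tfrac{\varepsilon-q}{2}$ without any reference to $x^*$. The only other subtlety is the systematic passage from $D$ to $S_X$ via normalization, which is handled uniformly by the bound $\|z-\tfrac{z_0}{\|z_0\|}\|\le 2\|z-z_0\|$ for $z\in S_X$ recorded above.
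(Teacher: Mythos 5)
Your proof is correct, and it is precisely the routine density/normalization argument that the paper omits (the author states that the proofs of Lemma \ref{l:isGL} and Lemma \ref{l:notGL} are ``straightforward and easy'' and gives no details): fix $q\in\qe$ with $0<q<\varepsilon$, approximate $x$ and each $y$ by normalized elements of $D$, use the $1$-Lipschitzness of $z\mapsto\dist(z,S(x^*,q))$ and the inclusion $S(x^*,q)\subset S(x^*,\varepsilon)$. The only remark worth adding is that the ``$\varepsilon$'' appearing in the membership condition $\tfrac{x}{\|x\|}\in S(x^*,\varepsilon)$ of the statement is a typo for ``$q$'' (and $x^*\in X^*$ should read $x^*\in S_{X^*}$ for the slice to be defined), which your argument implicitly and correctly assumes.
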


\begin{lemma}\label{l:notGL}Let $X$ be a Banach space and let $G\subset X^*$ be a dense subset of $X^*$. Let us assume that there are $x\in S_X$ and $\varepsilon > 0$ such that for every $x^*\in G$ with $x\in S\Big(\tfrac{x^*}{\|x^*\|},\varepsilon\Big)$ there exists $y\in S_X$ such that
\[
\operatorname{dist}\bigg(y,S\Big(\tfrac{x^*}{\|x^*\|},\varepsilon\Big)\bigg) + \operatorname{dist}\bigg(y,-S\Big(\tfrac{x^*}{\|x^*\|},\varepsilon\Big)\bigg) \geq 2+\varepsilon.
\]
Then $X$ is not (GL).
\end{lemma}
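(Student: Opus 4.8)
The plan is to argue by contradiction, upgrading the hypothesis, which only controls functionals in the dense set $G$, to the full (GL) condition for all normalized functionals via a perturbation argument. First I would assume $X$ is (GL) and take the pair $x\in S_X$, $\varepsilon>0$ furnished by the hypothesis. Applying the definition of (GL) to this same $x$ but with the smaller parameter $\varepsilon/2$, I obtain a functional $z^*\in S_{X^*}$ with $x\in S(z^*,\varepsilon/2)$ and
\[
\dist(y,S(z^*,\varepsilon/2))+\dist(y,-S(z^*,\varepsilon/2))<2+\tfrac{\varepsilon}{2}\qquad\text{for all }y\in S_X.
\]

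Next I would exploit the density of $G$ to approximate $z^*$: choose $x^*\in G$ with $\|x^*-z^*\|<\varepsilon/4$. Since $\|z^*\|=1$ this forces $x^*\neq 0$, and the elementary estimate $\|\tfrac{x^*}{\|x^*\|}-z^*\|\le 2\|x^*-z^*\|<\varepsilon/2$ shows the normalized functional stays close to $z^*$. The crucial observation is the monotonicity of slices under perturbation of the functional: if $\|w^*-v^*\|\le\delta$ then $S(v^*,\eta)\subset S(w^*,\eta+\delta)$, because $w^*(u)\ge v^*(u)-\delta$ on $B_X$. Combined with the trivial inclusion $S(w^*,\eta_1)\subset S(w^*,\eta_2)$ for $\eta_1\le\eta_2$, this yields
\[
S(z^*,\varepsilon/2)\subset S(\tfrac{x^*}{\|x^*\|},\varepsilon),
\]
and, by negating all elements, the analogous inclusion for the corresponding negative slices. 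In particular $x\in S(\tfrac{x^*}{\|x^*\|},\varepsilon)$, so the hypothesis is applicable to $x^*$.

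Finally I would play the two regimes off against each other. The hypothesis produces some $y\in S_X$ with
\[
\dist\bigl(y,S(\tfrac{x^*}{\|x^*\|},\varepsilon)\bigr)+\dist\bigl(y,-S(\tfrac{x^*}{\|x^*\|},\varepsilon)\bigr)\ge 2+\varepsilon,
\]
whereas the (GL) inequality above holds for this very $y$ at $z^*$. Since the distance to a set can only decrease as the set grows, the inclusions from the previous step give $\dist(y,S(\tfrac{x^*}{\|x^*\|},\varepsilon))\le\dist(y,S(z^*,\varepsilon/2))$ and likewise for the negative slices; chaining the three displays then forces $2+\varepsilon\le 2+\varepsilon/2$, which is absurd. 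Hence $X$ is not (GL).

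I expect the only genuinely delicate point to be the bookkeeping of parameters: one must arrange the containment to run from the smaller (GL) slice into the larger hypothesis slice, so that distance monotonicity points the right way, and one must absorb both the slice-widening $\delta$ coming from the perturbation and the factor $2$ lost when normalizing $x^*$ inside the gap $\varepsilon-\varepsilon/2$. Everything else is routine, and since no compactness or separability is invoked, the argument is valid in an arbitrary Banach space.
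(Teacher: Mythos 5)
The paper omits the proof of this lemma as ``straightforward and easy,'' and your argument by contradiction --- applying (GL) with the parameter $\varepsilon/2$, perturbing the resulting functional $z^*$ to a nearby element $x^*\in G$, and using the inclusion $S(z^*,\varepsilon/2)\subset S\big(\tfrac{x^*}{\|x^*\|},\varepsilon\big)$ together with monotonicity of distances under set inclusion --- is correct and is exactly the routine verification intended. The only cosmetic point is that $\|x^*-z^*\|<\varepsilon/4$ forces $x^*\neq 0$ only when $\varepsilon<4$; this is repaired either by noting that the hypothesis is vacuous for $\varepsilon>2$ (every slice then equals $B_X$, so no $y$ can achieve distance sum $\geq 2+\varepsilon$) or simply by approximating within $\min\{\varepsilon/4,\tfrac12\}$.
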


\begin{proof}[Proof of Proposition~\ref{p:redukceGL}]
If $X$ is not (GL), we may put $\F = \S(X)$. Let us assume that $X$ is (GL). For every $x\in X$ and $\varepsilon > 0$, we pick a point $I_1(x,\varepsilon)\in S_{X^*}$ such that $\tfrac{x}{\|x\|}\in S\big(I_1(x,\varepsilon),\varepsilon\big)$ and, for every $y\in X$,
$$\operatorname{dist}\Big(\tfrac{y}{\|y\|},S\big(I_1(x,\varepsilon),\varepsilon\big)\Big) + \operatorname{dist}\Big(\tfrac{y}{\|y\|},-S\big(I_1(x,\varepsilon),\varepsilon\big)\Big) < 2+\varepsilon.$$
Now, for every $x, y\in X$ and $\varepsilon > 0$, we pick two points $I_2(x,y,\varepsilon), I_3(x,y,\varepsilon)\in S\big(I_1(x,\varepsilon),\varepsilon\big)$ with
$$\left\|\tfrac{y}{\|y\|} - I_2(x,y,\varepsilon)\right\| + \left\|\tfrac{y}{\|y\|} + I_3(x,y,\varepsilon)\right\| < 2+\varepsilon.$$
Note that, since $X$ is (GL), for every $x,y\in X$ and $\varepsilon > 0$ the points $I_2(x,y,\varepsilon)$ and $I_3(x,y,\varepsilon)$ exist.

Now, let us define a mapping $\phi:[X]^{\leq\omega}\to[X]^{\leq\omega}$ by putting for every $C\in[X]^{\leq\omega}$
\[\begin{split}
	\phi_1(C) & : = \qespan C,\\
	\phi_{n+1}(C) & := \qespan\Big(\phi_n(C)\cup \big\{I_2(x,y,\varepsilon),I_3(x,y,\varepsilon)\setsep x,y\in \phi_n(C),\,\varepsilon\in\qe_+\big\}\Big),\qquad n\in\en,\\
	\phi(C) & : = \bigcup_{n\in\en}\phi_n(C).
\end{split}\]
We will show that $\F:=\{\overline{\phi(C)}\setsep C\in[X]^{\leq\omega}\}$ is the family we need. It is obvious that $\phi$ is monotone and $\phi(C)\supset C$ for every $C\in[X]^{\leq\omega}$. In order to prove $\omega$-monotonicity, pick an increasing sequence $(C_k)_{k\in\en}$ of countable subsets of $X$. By monotonicity, we have $\phi(\bigcup_{k=1}^\infty C_k)\supset \bigcup_{k=1}^\infty \phi(C_k)$. By induction on $n\in\en$, it is straightforward to verify that $\phi_n(\bigcup_{k\in\en} C_k)\subset \bigcup_{k=1}^\infty \phi(C_k)$; hence, we have $\phi(\bigcup_{k=1}^\infty C_k)\subset \bigcup_{k=1}^\infty \phi(C_k)$ and $\phi$ is $\omega$-monotone. Fix $C\in[X]^{\leq\omega}$. It remains to show that $\overline{\phi(C)}$ is (GL). Note that $\phi(C)$ is $\qe$-linear. It follows from the definition of $\phi(C)$ that we have
\begin{equation}
\label{eq:condRichIsGL}\forall x,y\in \phi(C)\; \forall\varepsilon\in\qe_+:\quad \{I_2(x,y,\varepsilon),I_3(x,y,\varepsilon)\}\subset \overline{\phi(C)}.
\end{equation}
We will verify the assumption of Lemma \ref{l:isGL} with $D = \phi(C)$ for the space $\overline{\phi(C)}$. Fix $x\in \phi(C)$ and $\varepsilon\in\qe_+$ and consider $x^*:=\tfrac{I_1(x,\varepsilon)|_{\overline{\phi(C)}}}{\|I_1(x,\varepsilon)|_{\overline{\phi(C)}}\|}$. Then $\tfrac{x}{\|x\|}\in S\big(I_1(x,\varepsilon),\varepsilon\big)\cap \overline{\phi(C)}\subset S(x^*,\varepsilon)$. Fix any $y\in \phi(C)$. Then we have $\{I_2(x,y,\varepsilon), I_3(x,y,\varepsilon)\}\subset S\big(I_1(x,\varepsilon),\varepsilon\big)\cap\overline{\phi(C)}\subset S(x^*,\varepsilon)$ and
\[
\left\|\tfrac{y}{\|y\|} - I_2(x,y,\varepsilon)\right\| + \left\|\tfrac{y}{\|y\|} + I_3(x,y,\varepsilon)\right\| < 2+\varepsilon.
\]
Hence,
\[
\operatorname{dist}\Big(\tfrac{y}{\|y\|},S\big(x^*,\varepsilon\big)\Big) + \operatorname{dist}\Big(\tfrac{y}{\|y\|},-S\big(x^*,\varepsilon\big)\Big) < 2+\varepsilon, 
\]
which shows that the assumption of Lemma \ref{l:isGL} is satisfied for the space $\overline{\phi(C)}$ and so $\overline{\phi(C)}$ is (GL).
\end{proof}

In order to prove the other implication (i.e. Proposition~\ref{p:redukceGLAsplund}), we restrict our attention to Asplund spaces. We recall the concept introduced in \cite{cuthFab} which serves as a link between $X$ and $X^*$ (and, by \cite[Theorem 2.3]{cuthFab}, exists right if and only if $X$ is Asplund).

\begin{definition}\label{generator}
\rm By an \emph{Asplund generator} in a Banach space $X$ we understand any correspondence 
$G:[X]^{\,\le\omega}\longrightarrow [X^*]^{\,\le\omega}$ such that
\smallskip

\noindent (a) $\big(\overline{\rm sp}\,C\big)^* = \overline {G(C)|_{\overline{\rm sp}\,C}}\ $
for every $C\in [X]^{\,\le\omega}$;

\smallskip
\noindent (b) if $C_1,\ C_2,\ \ldots$ is an increasing sequence in $[X]^{\,\le\omega}$, then
$G(C_1\cup C_2\cup\cdots)=G(C_1)\cup G(C_2)\cup\cdots\,$; 

\smallskip
\noindent (c) $\bigcup\{G(C):\ C\in [X]^{\,\le\omega}\}$ is a dense subset in $X^*$; and\smallskip

\noindent (d) if $C_1, C_2\in [X]^{\,\le\omega}$ are such that $\overline{\rm sp}\, C_1=\overline{\rm sp}\, C_2$,
then $\overline{\rm sp}\, G(C_1)=\overline{\rm sp}\, G(C_2)$.

\end{definition}

By $\S_{\oo}(X\times X^*)$ we denote the set $\{V\times Y\setsep V\in\S(X),\, Y\in\S(X^*)\}$. We say that $\R\subset \S_{\oo}(X\times X^*)$ is \emph{rich} if every member of $\S_{\oo}(X\times X^*)$ is contained in some $V\times Y\in\R$ and whenever we have an increasing sequence $(V_i\times Y_i)_{i\in\en}$ in $\R$, then $\overline{\bigcup_{i\in\en} V_i\times Y_i} = \overline{\bigcup_{i\in\en} V_i}\times\overline{\bigcup_{i\in\en} Y_i}  \in\R$.

\begin{proof}[Proof of Proposition \ref{p:redukceGLAsplund}]If $X$ is (GL), it suffices to put $\A:=\S(X)$. Therefore, we may assume that $X$ is not (GL).
Let $G:[X]^{\leq\omega}\to [X^*]^{\leq\omega}$ be an Asplund generator in $X$. Since $X$ is not (GL), there are $x_0\in S_X$ and $\varepsilon_0 > 0$ such that
	\begin{equation}
		\label{eq:notGLWeHave}\forall x^*\in S_{X^*}: x_0 \in S(x^*,\varepsilon_0)\quad \exists y\in S_X: \operatorname{dist}(y,S(x^*,\varepsilon_0)) + \operatorname{dist}(y,-S(x^*,\varepsilon_0)) \geq 2+\varepsilon_0.
	\end{equation}
	By Lemma \ref{l:notGL} and the definition of an Asplund generator, it suffices to find a rich family $\A\subset\S(X)$ such that for every $V\in\A$ we have $x_0\in V$ and there exists $C\subset V$ with $\closedSpan C = V$ satisfying the following property.
	\begin{equation}\begin{split}
		\label{eq:notGLWeWant}\forall x^*\in G(C): \;& x_0 \in S\Big(\tfrac{x^*|_V}{\|x^*|_V\|},\varepsilon_0\Big)\quad \exists y\in S_V\\
		& \operatorname{dist}\bigg(y,S\Big(\tfrac{x^*|_V}{\|x^*|_V\|},\varepsilon_0\Big)\bigg) + \operatorname{dist}\bigg(y,-S\Big(\tfrac{x^*|_V}{\|x^*|_V\|},\varepsilon_0\Big)\bigg) \geq 2+\varepsilon_0.
	\end{split}\end{equation}
	
	Define $\R'\subset\S_{\oo}(X\times X^*)$ as the family consisting of all rectangles $\overline{\rm sp}\, C\times \overline{\rm sp}\, G(C)$, with $C\in[X]^\en$, such that the assignment
	\[
\overline{\rm sp}\, G(C)\ni x^*\longmapsto x^*{}|\,_{\overline{\rm sp}\, C}\in(\overline{\rm sp}\, C)^* 
\]
is a surjective isometry. It is proved in \cite[proof of Theorem 2.3 (ii)$\implies$(iii)]{cuthFab} that $\R'$ is a rich family and whenever we have $V_1\times Y_1$, $V_2\times Y_2$ in $\R'$ such that $V_1\subset V_2$, then $Y_1\subset Y_2$. Consequently, the family $\R_1 := \{V\setsep \exists Y:\,V\times Y\in\R'\}\subset \S(X)$ is rich.
	
	For every $x^*\in X^*$, we pick, if it exists, a point $I(x^*)\in S_X$ such that
	\begin{equation}
		\label{eq:notGLCondition} \operatorname{dist}\bigg(I(x^*),S\Big(\tfrac{x^*}{\|x^*\|},\varepsilon_0\Big)\bigg) + \operatorname{dist}\bigg(I(x^*),-S\Big(\tfrac{x^*}{\|x^*\|},\varepsilon_0\Big)\bigg) \geq 2 + \varepsilon_0.
	\end{equation}
	Define $\R_2\subset\S(X)$ as the family consisting of all $V\in\S(X)$ with $x_0\in V$ such that there is a countable set $C\subset V$ with $\closedSpan C = V$ and
	\begin{equation}
		\label{eq:richConditionNotGLDense} \forall x^*\in G(C):\quad \big(I(x^*)\text{ is defined }\implies I(x^*)\in V\big).
	\end{equation}	
	We shall show that $\R_2$ is a rich family.		
	
	As regards the cofinality of $\R_2$, fix any countable set $S\subset X$. Put $C_0 := S\cup\{x_0\}$. Assume that for some $m\in \en$ we already found countable sets $C_0 \subset C_1 \subset\ldots\subset C_{m-1} \subset X$. Then we find $C_m\supset C_{m-1}$ such that, for every $x^*\in G(C_{m-1})$, we have $I(x^*)\in C_m$ whenever it is defined. Do so for every $m\in \en$ and put finally
$C := \bigcup_{i=0}^\infty C_i$. It remains to see that $V:=\overline{\spn}\; C\in\R_2$, which follows immediately from the construction because we have $G(C) = \bigcup_{i=0}^\infty G(C_i)$.

For checking the $\sigma$-completeness of $\R_2$, consider any increasing sequence $(V_i)_{i\in\en}$ of elements in $\R_2$. Let, for every $i\in\en$, be $C_i\subset V_i$ a set with $\closedSpan C_i = V_i$ satisfying \eqref{eq:richConditionNotGLDense} for $C_i$ and $V_i$. We may assume that $C_1\subset C_2\subset\ldots$ (if  not, we replace it by $C_1, C_1\cup C_2, C_1\cup C_2\cup C_3, \ldots$). Then $V = \overline{V_1\cup V_2\cup\ldots}$ contains $x_0$ and we put $C:=C_1\cup C_2\ldots$. Then $\closedSpan C = V$. Moreover, since $C_1,C_2,\ldots$ is an increasing sequence, \eqref{eq:richConditionNotGLDense} is satisfied.

Finally, we put $\A : = \R_1\cap \R_2$. It remains to prove that our $\A$ ``works''; i.e., no member of $\A$ is (GL). So, pick any $V\in\A$. We need to show that \eqref{eq:notGLWeWant} holds. Fix a set $C$ with $\closedSpan C = V$ from the definition of the family $\R_2$. Fix $x^*\in G(C)$ with $x_0\in S\Big(\tfrac{x^*|_{V}}{\|x^*|_{V}\|},\varepsilon_0\Big)$. By the definiton of $\R_1$, there is a countable set $C'\subset V$ such that $\closedSpan C' = V$ and, for every $y^*\in \closedSpan G(C')$, we have $\|y^*\| = \|y^*|_{V}\|$. By the definition of an Asplund generator, $\closedSpan G(C) = \closedSpan G(C')$; thus, we have $\|x^*\| = \|x^*|_{V}\|$. Hence, we have $x_0\in S(x^*,\varepsilon_0)$ and, by \eqref{eq:notGLWeHave}, $I(x^*)$ is defined; hence, by \eqref{eq:richConditionNotGLDense}, we have $y:=I(x^*)\in S_V$. Consequently,
	\[\begin{split}
		\operatorname{dist}\bigg(y,\, & S\Big(\tfrac{x^*|_V}{\|x^*|_V\|},\varepsilon_0\Big)\bigg) + \operatorname{dist}\bigg(y,-S\Big(\tfrac{x^*|_V}{\|x^*|_V\|},\varepsilon_0\Big)\bigg)\\
		& = \operatorname{dist}\bigg(y,S\Big(\tfrac{x^*}{\|x^*\|},\varepsilon_0\Big)\cap B_V\bigg) + \operatorname{dist}\bigg(y,-S\Big(\tfrac{x^*}{\|x^*\|},\varepsilon_0\Big)\cap B_V\bigg)\\
		& \geq \operatorname{dist}\bigg(y,S\Big(\tfrac{x^*}{\|x^*\|},\varepsilon_0\Big)\bigg) + \operatorname{dist}\bigg(y,-S\Big(\tfrac{x^*}{\|x^*\|},\varepsilon_0\Big)\bigg) \stackrel{\eqref{eq:notGLCondition}}{\geq} 2 + \varepsilon_0.
	\end{split}\]
Thus, \eqref{eq:notGLWeWant} holds and $V$ is not (GL).
\end{proof}

\section*{Acknowledgements}

The author would like to thank L. Zaj\'i\v{c}ek and M. Fabian for  their valuable comments which improved the content of the introductory section and to an anonymous referee of Comptes Rendus Mathematique who observed the content of Remark \ref{r:lushIsGLOriginalProof}.

\end{document}